\newcommand{\bq}{\begin{quote}}
\newcommand{\eq}{\end{quote}}
\newcommand{\bi}{\begin{itemize}}
\newcommand{\ei}{\end{itemize}}
\newcommand{\bd}{\begin{description}}
\newcommand{\ed}{\end{description}}
\newcommand{\ben}{\begin{enumerate}}
\newcommand{\een}{\end{enumerate}}
\newcommand{\bbm}{\begin{bmatrix}}
\newcommand{\ebm}{\end{bmatrix}}
\newcommand{\bea}{\begin{eqnarray*}}
\newcommand{\eea}{\end{eqnarray*}}
\newtheorem{theorem}{Theorem}[section]
\def\sX{\mathcal{X}}
\def\GF{\mathbb{F}}
\def\CC{\mathbf{C}}
\def\KK{\mathbf{K}}
\def\LL{\mathbf{L}}
\def\TT{\mathbf{T}}
\def\UU{\mathbf{U}}
\def\VV{\mathbf{V}}
\def\WW{\mathbf{W}}
\def\XX{\mathbf{X}}
\def\YY{\mathbf{Y}}
\def\ZZ{\mathbf{Z}}
\def\Aut{\mathop{{\rm Aut}}}
\def\2G2{\ensuremath{^2{\rm G}_2}}
\def\sl{{\rm{SL}}}
\def\gl{{\rm{GL}}}
\def\su{{\rm{SU}}}
\def\psl{{\rm{PSL}}}
\def\pgl{{\rm{PGL}}}
\def\so{{\rm{SO}}}
\def\ppsl{ ( {\rm{P}} ) {\rm{SL}}}
\def\ppsp{ ( {\rm{P}} ) {\rm{Sp}}}
\definecolor{darkgreen}{rgb}{0,0.6,0}
\begin{document}

\title{Black box, white arrow}
\date{29 April 2014}

\author{Alexandre Borovik}
\address{School of Mathematics, University of Manchester, UK}
\address{alexandre@borovik.net}
\author{\c{S}\"{u}kr\"{u} Yal\c{c}\i nkaya}
\address{\.{I}stanbul University, previously Nesin Mathematics Village, Izmir, Turkey}
\address{sukru.yalcinkaya@gmail.com}

\subjclass{Primary 20P05, Secondary 03C65}

\begin{abstract}
The present paper proposes a new and systematic approach to the so-called black box group methods in computational group theory. Instead of a single black box, we consider categories of black boxes and their morphisms. This makes new classes of black box problems accessible. For example, we can enrich  black box groups by actions of outer automorphisms.

As an example of application of this technique, we construct Frobenius maps on black box groups of untwisted Lie type in odd characteristic (Section~\ref{ssec:Frobenius}) and inverse-transpose automorphisms on black box groups encrypting $\ppsl_n(\GF_q)$.

One of the advantages of our approach is that it allows us to work in black box groups over finite fields of big characteristic. Another advantage is  explanatory power of our methods; as an example, we explain Kantor's and Kassabov's construction of an involution in black box groups encrypting $\sl_2(2^n)$.

Due to the nature of our work we also have to discuss a few methodological issues of the black box group theory.\\

\noindent

\end{abstract}

\maketitle

\begin{footnotesize}
\tableofcontents
\end{footnotesize}

\section{Introduction}

Black box groups were introduced by Babai and Szemeredi \cite{babai84.229} as an idealized setting for randomized algorithms for solving permutation and matrix group problems in computational group theory. A black box group $\XX$ is a black box (or an oracle, or a device, or an algorithm) operating with $0$--$1$ strings of uniform length which encrypt (not necessarily in a unique way) elements of some finite group $G$. In various classes of black box problems the isomorphism type of $G$ could be known in advance or unknown.

This original definition of a black box group appears to have gone out of vogue with the black box group community mostly because it is too abstract and demanding in the context of practical computations with matrix groups. The aim of this paper is to refresh and expand the concept by introducing and systematically using \emph{morphisms}, that is, polynomial time computable homomorphisms of black box groups.

This modest change allows us to apply some basic category theoretic and model theoretic ideology.
As we show in this and subsequent papers, this dramatically expands the range of available tools for structural analysis of black box groups and allows to solve black box problems previously not accessible to existing methods. The development of this new approach requires a detailed discussion of some methodological issues of black box group theory.

The  paper contains a number of new results, and two of them deserve highlighting. They are concerned with construction of automorphisms of black box groups; the first one is construction of Frobenius maps on black box Chevalley groups of untwisted type and odd characteristic. This is stated and proven in Section~\ref{ssec:Frobenius}.

The second---and the most important---result of the paper is the ``reification of involutions'', Theorem~\ref{th:reification}.

In our next work \cite{suko14A}, these constructions are applied to recognition of black box groups $\ppsl_2(q)$ for odd $q$.

Finally, we have to mention that the paper belongs to a series of works aimed at a systematic development of methods of structural analysis of black box groups \cite{suko03,suko12A,suko14B,suko14A,suko12B,suko12C,suko02,yalcinkaya07.171}.

\section{Black box groups}
\subsection{Axioms for black box groups}

The functionality of a black box $\XX$ for a finite group $G$ is specified by the following axioms.

\begin{itemize}
\item[\textbf{BB1}] $\XX$ produces  strings of fixed length $l(\XX)$ encrypting random (almost) uniformly distributed elements from $G$; this is done in probabilistic time polynomial in $l(\XX)$.
\item[\textbf{BB2}] $\XX$ computes, in probabilistic time polynomial in $l(\XX)$, a string encrypting the product of two group elements given by strings or a string encrypting the inverse of an element given by a string.
\item[\textbf{BB3}] $\XX$ decides, in probabilistic time polynomial in $l(\XX)$, whether two strings encrypt the same element in $G$---therefore identification of strings is a canonical projection
\begin{diagram} \XX & \rDotsto^\pi & G.
 \end{diagram}
\end{itemize}

We shall say in this situation that $\XX$ is a \emph{black box over $G$} or that a black box $\XX$ \emph{encrypts} the group $G$. Notice that we are not making any assumptions of practical computability or the time complexity of the projection $\pi$.

A typical example of a black box group is provided by a group $G$ generated in a big matrix group $\gl_n(r^k)$ by several matrices $g_1,\dots, g_l$. The product replacement algorithm \cite{celler95.4931} produces  a sample of (almost) independent elements from a distribution on $G$ which is close to the uniform distribution (see a discussion and further development in \cite{babai00.627, babai04.215, bratus99.91, gamburd06.411, lubotzky01.347, pak00.476, pak01.301, pak.01.476, pak02.1891}). We can, of course, multiply, invert, compare matrices. Therefore the computer routines for these operations together with the sampling of the product replacement algorithm run on the  tuple of generators $(g_1,\dots, g_l)$ can be viewed as a black box $\XX$ encrypting the group $G$. The group $G$ could be unknown---in which case we are interested in its isomorphism type---or its isomorphism type could be known, as it happens in a variety of other black box problems.

The concept of a black box can be applied to rings, fields, and, in our next paper \cite{suko14A}, even to projective planes. We shall construct new black boxes from the given ones, and in these constructions strings in $\XX$ will actually be  pointers to other black boxes. Therefore it is convenient to think of elements of black boxes as other black boxes---the same way as in the ZF set theory all objects are sets, with some sets being elements of others. A projective plane constructed in our next paper \cite{suko14A} provides a good example: it could be seen as consisting of points and lines, where a ``line'' is a black box that produces random ``points'' on this line and a ``point'' is a black box that produces random ``lines'' passing through this point.

By the nature of our axioms, all algorithms for black box groups (in the sense of Axioms BB1--BB3) are Monte Carlo. In most applications, they can be easily made Las Vegas if additional information of some kind is provided about $\XX$---for example a set of its generators, that is, strings in $\XX$ that represent a generating set of $G$, or the isomorphism type of $G$.

In our subsequent papers, especially in \cite{suko14A}, it becomes clear that the distinction between Monte Carlo  and Las Vegas probabilistic algorithms is external to the theory of black box groups although it is quite natural in its concrete applications.

\subsection{Global exponent and Axiom BB4}

Notice that even in routine examples the number of elements of a matrix group $G$ could be astronomical, thus making many natural questions about the black box $\XX$ over $G$---for example, finding the isomorphism type or the order of $G$---inaccessible for all known deterministic methods. Even when $G$ is cyclic and thus is characterized by its order, existing approaches to finding exact multiplicative orders of matrices over finite fields are conditional and involve oracles either for the discrete logarithm problem in finite fields or for prime factorization  of integers.

Nevertheless black box problems for matrix groups have a feature which makes them more accessible:

\begin{itemize}
\item[\textbf{BB4}] We are given a \emph{global exponent} of $\XX$, that is, a natural number $E$ such that $\pi(x)^E = 1$ for all strings $x \in \XX$ while computation of $x^E$ is computationally feasible (say, $\log E$ is polynomially bounded in terms of $\log |G|$).
\end{itemize}

Usually, for a black box group $\XX$ arising from a subgroup in the ambient group $\gl_n(r^k)$, the exponent of $\gl_n(r^k)$ can be taken for a global exponent of $\XX$.

One of the reasons why  the axioms BB1--BB4, and, in particular, the concept of global exponent, appear to be natural, is provided by some surprising model-theoretic analogies. For example, D'Aquino and Macintyre  \cite{D'Auquino00.311} studied non-standard finite fields defined in a certain fragment of bounded Peano arithmetic; it is called $I\Delta_0+\Omega_1$ and imitates proofs and computations of polynomial time complexity in modular arithmetic. It appears that such a basic and fundamental fact as the Fermat Little Theorem has no proof that can be encoded in $I\Delta_0+\Omega_1$; the best that has so far been proven in $I\Delta_0+\Omega_1$ is that the multiplicative group $\GF_p^*$ of the prime field $\GF_p$ has a global exponent $E <2p$ \cite{D'Auquino00.311}. Under a stronger version of bounded arithmetic, the Fermat Little Theorem is proven by  Je{\v{r}}{\'a}bek \cite{jerabek10.262}. These results have to be seen in the context of the earlier work by Kraj{\'{\i}}{\v{c}}ek  and Pudl{\'a}k
\cite{krajicek98.82} who proved that Buss' subtheory $S^1_2$ does not prove the Fermat Little Theorem if the RSA system is secure. Another result by Je{\v{r}}{\'a}bek \cite{jerabek:dual-wphp} directly links bounded arithmetic with the black box group theory: he showed that the Fermat Little Theorem in $S^1_2$ equivalent to the correctness of the Rabin-Miller primality testing \cite{rabin80.128}, an archetypal example of a black box group algorithm.

We shall discuss model theory and logic connections of black box group theory in some detail elsewhere.

\subsection{Cartan decomposition  and Axiom BB5}

Our last comment on the axiomatics of black box groups is an observation that in almost all our work in subsequent papers  Axiom BB4 can be replaced by its corollary, Axiom BB5,  the latter is closely connected to the concept of Cartan decomposition in Lie groups, see the discussion of Cartan decomposition in \cite{borovik-inherited}.

\begin{itemize}
\item[\textbf{BB5}] We are given a partial $1$- or $2$-valued function $\rho$ of two variables on $\XX$ that computes, in probabilistic time polynomial in $l(\XX)$, square roots in cyclic subgroups of $\XX$ in the following sense:
    \bq
    if $x\in \XX$ and $y\in \langle x\rangle$ has square roots in $\langle x\rangle$ then $\rho(x,y)$ is the set of these roots.
    \eq
\end{itemize}

In particular,
\bi
\item
if $|x|$ is even, $\rho(x,1)$ is the subgroup of order $2$ in $\langle x \rangle$;
\item if $|x|$ is even, then, consecutively applying $\rho(x, \cdot)$ to $2$-elements in $\langle x \rangle$, we can find all $2$-elements in $\langle x \rangle$;
\item if $|x|$ is odd, and $y\in \langle x\rangle$ then $\rho(x,y)$ is the unique square root of $y$ in $\langle x \rangle$.
\ei

We emphasize that Axiom BB5 provides everything needed for construction of centralizers of involutions by the maps $\zeta_0$ and $\zeta_1$ \cite{borovik02.7}.

Axiom BB5 follows from BB4 by the  Tonelli-Shanks algorithm \cite{shanks73.51,tonelli1891.344} applied to the cyclic group $\langle x \rangle$.

\begin{quote}
\textbf{\emph{In this paper, we assume that all our black box groups satisfy assumptions BB1--BB4 or BB1--BB3 and B5. }}
\end{quote}

We emphasize that we do not assume that black box groups under consideration in this paper are given as subgroups of ambient matrix groups; thus our approach is wider than the setup of the computational matrix group project \cite{leedham-green01.229}. Notice that we are not using the Discrete Logarithm Oracles for finite fields $\GF_q$: in our setup, we start with a black box group without any access to the field over which the group is defined. Nevertheless we are frequently concerned with black box groups encrypting classical linear groups; even so, some of our results (such as Theorems~\ref{th:sun-in-sln} and \ref{th:G2-in-SL8}) do not even involve the assumption that we know the underlying field of the group but instead assume that we  know the characteristic of the field without imposing bounds on the size of the field. Finally, in the case of groups over fields of small characteristics we can prove much sharper results, see, for example, \cite{suko12A}. Here, it is natural to call the characteristic $p$ ``small'' if it is known and small enough for the linear in $p$ running time of algorithms to be feasible.

So we attach to statements of our results one of the two labels:
\bi
\item known characteristic, or
\item small characteristic.
\ei
Our next paper \cite{suko12B} is dominated by ``known characteristic'' results. In this one, we also obtain some more specific results for small odd characteristics.

\section{Morphisms}

\subsection{Morphisms}
\label{sec:morphisms}

Given two  black boxes $\XX$ and $\YY$ encrypting  finite groups $G$ and  $H$, respectively, we say that a map $\zeta$ which assigns strings from $\YY$ to strings from $\XX$  is a \emph{morphism} of black box groups, if
\bi
\item the map $\zeta$ is computable in probabilistic time polynomial in $l(\XX)$ and $l(\YY)$, and
\item there is an abstract homomorphism $\phi:G \to H$ such that the following diagram  is commutative:
\begin{diagram}
\XX &\rTo^{\zeta} &\YY\\
\dDotsto_{\pi_{\XX}} & &\dDotsto_{\pi_{\YY}}\\
G &\rTo^{\phi} & H
\end{diagram}
where $\pi_\XX$ and $\pi_\YY$ are the canonical projections of $\XX$ and $\YY$ onto $G$ and $H$, respectively.
\ei
We shall say in this situation that a morphism $\zeta$ \emph{encrypts} the homomorphism $\phi$. For example, morphisms arise naturally when we replace a generating set for the black box group $\XX$ by a more convenient one and start sampling the product replacement algorithm for the new generating set; in fact, we replace a black box for $\XX$  and deal with a morphism $\YY \longrightarrow \XX$ from the new black box $\YY$ into $\XX$.

Observe that a map \begin{diagram}
G & \rDotsto^{\phi} & H
\end{diagram} from a group to a group is a homomorphism of groups if and only if its graph
 \[
F = \{(g,\phi(g)): g\in G\}
\]
is a subgroup of $G \times H$.

At this point it becomes useful to introduce direct products of black boxes: if $\XX$ encrypts $G$ and $\YY$ encrypts $H$ then the black box  $\XX\times \YY$ produces pairs of strings $(x,y)$ by sampling $\XX$ and $\YY$ independently, with operations carried out componentwise in $\XX$ and $\YY$; of course, $\XX\times \YY$ encrypts $G\times H$.

This allow us to treat a morphism
\begin{diagram}
\XX & \rDotsto^{\zeta} & \YY
\end{diagram}
of black box groups as a black box subgroup $\ZZ\hookrightarrow \XX\times \YY$ encrypting $F$:
\[
\ZZ = \{(x,\zeta(x)): x\in \XX\}
\]
with the natural projection
\bea
\pi_\ZZ: \ZZ & \longrightarrow & F\\
(x,\zeta(x)) & \mapsto & (\pi_\XX(x), \phi(\pi_\XX(x)).
\eea

In practice this could  mean (although in some cases we use a more sophisticated construction) that we may be able to find strings $x_1,\dots,x_k$ generating $\XX$ with known to us images $y_1=\zeta(x_1),\dots, y_k=\zeta(x_k)$ in $\YY$ and then use the product replacement algorithm to run a black box for the subgroup \[\ZZ = \langle (x_1,y_1),\dots,(x_k,y_k)\rangle \leqslant \XX \times \YY\] which is of course exactly the graph $\{\, (x,\zeta(x))\,\}$ of the homomorphism $\zeta$. Random sampling of the black box $\ZZ$ returns strings $x\in \XX$ with their images $\zeta(x)\in \YY$ already attached.

Slightly abusing terminology, we say that a morphism $\zeta$ is an embedding, or an epimorphism, etc., if  $\phi$ has these properties. In accordance with standard conventions, hooked arrows \begin{diagram}& \rInto & \end{diagram} stand for embeddings and double-headed arrows \begin{diagram}& \rOnto & \end{diagram} for epimorphisms; dotted arrows are reserved for abstract homomorphisms, including natural projections
\begin{diagram}
\XX & \rDotsto^{\pi_\XX} & \pi(\XX);
\end{diagram}
the latter are not necessarily morphisms, since, by the very nature of black box problems, we do not have efficient procedures for constructing the projection of a black box onto the (abstract) group it encrypts.

\subsection{Shades of black}

Polynomial time complexity is an asymptotic concept, to work with it we need an infinite class of objects. Therefore our theory refers to some infinite family $\sX$ of black box groups ($\sX$ of course varies from one black box problem to another). For $\XX \in \sX$, we denote by $l(\XX)$ the length of $0$--$1$ strings representing elements in $\XX$. We assume that, for every $\XX\in \sX$, basic operations of generating, multiplying, comparing strings in $\XX$ can be done in probabilistic polynomial time in $l(\XX)$.

We also assume that the lengths $\log E(\XX)$ of  global exponents $E(\XX)$ for $\XX\in \sX$ are bounded by a polynomial  in $l(\XX)$.

Morphisms $\XX \longrightarrow \YY$ in $\sX$ are understood as defined in Section~\ref{sec:morphisms} and their running times are bounded by a polynomial in $l(\XX)$ and $l(\YY)$.

At the expense of {slightly increasing $\sX$ and its bounds for complexity, we can include in $\sX$ a collection of explicitly given ``known'' finite groups. Indeed, using standard computer implementations of finite field arithmetic, we can represent every group $Y = \gl_n(p^k)$ as an algorithm or computer routine operating on $0$--$1$ strings of length $l(Y)= n^2k\lceil \log p \rceil$. Using standard matrix representations for simple algebraic groups, we can represent every group of points $Y = {\rm G}(p^k)$ of a reductive algebraic group $\rm G$ defined over $\GF_{p^k}$ as a black box $\YY$ generating and processing strings of length $l(\YY)$ polynomial in $k \log p$ and the Lie rank of $\YY$. Therefore an ``explicitly defined'' group can be seen a black box group, perhaps of a lighter shade of black.

We  feel that the best way to analyze a black box group $\XX$ encrypting a finite group $G$
is by a step-by-step construction of a chain of morphisms
\begin{diagram}
G & \lDotsto & \XX & \lOnto & \XX_1 & \lOnto & \XX_2 & \lOnto & \cdots & \lOnto  & \XX_n &\lOnto &G
\end{diagram}
at each step changing the shade of black and increasing the amount of information provided by the black boxes $\XX_i$.

\begin{figure}[htbp]
\begin{center}
\vspace{.5in}
\includegraphics[scale=.15]{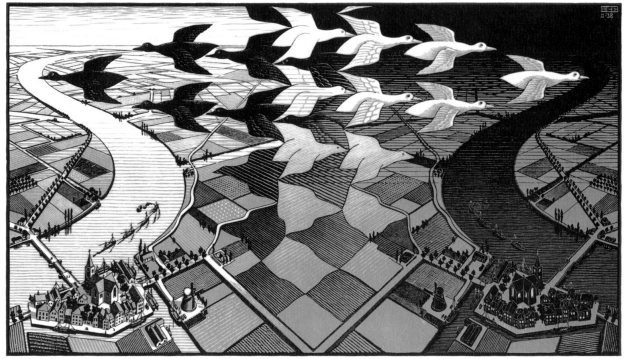}
\caption{ M.C. Escher,  \emph{Day and Night}, 1938}
\label{Escher}
\end{center}
\end{figure}

Step-by-step transformation of black boxes into ``white boxes'' and their complex entanglement is captured well by Escher's famous woodcut, Figure~\ref{Escher}.

\subsection{Randomized algorithms: Monte Carlo and Las Vegas}

This is a brief reminder of two canonical concepts for the benefit of those readers who do not come from a computational group theory background.

A \emph{Monte-Carlo algorithm} is a randomized algorithm which gives a correct output to a decision problem with probability strictly bigger than $1/2$. The probability of having incorrect output can be made arbitrarily small by running the algorithm sufficiently many times. A Monte-Carlo algorithm with outputs ``yes'' and ``no'' is called one-sided if the output ``yes'' is always correct.

A special subclass of Monte-Carlo algorithm is a \emph{Las Vegas algorithm} which either outputs a correct answer or reports failure (the latter with probability less than $1/2$). The probability of having a report of failure is prescribed by the user. A detailed comparison of Monte-Carlo and Las Vegas algorithms, both from practical and theoretical point, can be found in Babai's paper \cite{MR1444127}.

In our setup, Las Vegas makes no sense unless we are given additional information about $X \dashrightarrow G$, for example

\bi
\item generators $x_1,\dots, x_k$ of $\XX$, or
\item the order of $G$, or
\item the isomorphism type of $G$.
\ei

This additional information is frequently available in applications. Even so, it is frequently more practical and therefore preferable to recover the structure of a black box group $\XX$ via a sequence of morphisms
\begin{diagram}
G & \lDotsto^{\pi} & \XX & \lOnto^{\mu_1} & \XX_1 & \lOnto^{\mu_2}  & \cdots & \lOnto^{\mu_n}  & \XX_n &\lOnto^{\mu_{n+1}} &G\\
&& \dOnto_{\rm Id} &&&&&&&& \uOnto_{\rm Id}\\
&& \XX &&&&\relax\rOnto^\lambda &&&& G
\end{diagram}
with
\bi
\item crude but \emph{fast} Monte Carlo morphisms $\mu_i$ for $1 \leqslant i \leqslant n+1$, and
\item a Las Vegas morphism $\lambda$;
\ei
then everything becomes Las Vegas.

Using a ``real world'' simile, we prefer to apply to black box groups the well-known \emph{First Law of Metalworking}: \textbf{use the roughest file first}, see Figure~\ref{fig:files}.

\begin{figure}[h]
\begin{center}
\includegraphics[scale=0.1]{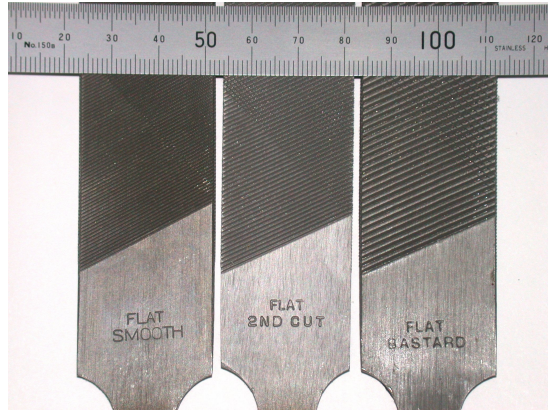}
\end{center}
\caption{Relative tooth sizes for \emph{smooth}, \emph{second cut}, and \emph{bastard} files. There are six different cuts altogether defined as (from roughest to smoothest): \emph{rough, middle, bastard, second cut, smooth}, and \emph{dead smooth}. Source: \emph{Wikipedia}. }
\label{fig:files}
\end{figure}

Even in relatively simple black box problems we may end up dealing with a sophisticated category of black boxes and their  morphisms---this is emphasized in the title of this paper.

\section{Black box fields}

A \emph{black box} (finite) \emph{field} $\KK$ is an oracle or an algorithm operating on $0$-$1$ strings of uniform length (input length), $l(\KK)$, which encrypts a field of known characteristic $p$. The oracle produces random elements from $\KK$ in probabilistic time polynomial in $l(\KK)$, computes $x+y$, $xy$, $x^{-1}$ (for $x \ne 0$) and decides whether $x=y$ for strings $x,y \in \KK$.  We refer the reader to \cite{boneh96.283, maurer07.427} for more details on black box fields and their applications to cryptography.

In this and subsequent paper, we shall be using some results about the isomorphism problem of black box fields \cite{maurer07.427}, that is, the problem of constructing an isomorphism and its inverse between $\KK$ and an explicitly given finite field $\GF_{p^n}$. The explicit data for a finite field of cardinality $p^n$ is defined to be a system of {\emph{structure constants} over the prime field, that is} $n^3$ elements $(c_{ijk})_{i,j,k=1}^n$ of the prime field {$\mathbb{F}_p = \mathbb{Z}/p\mathbb{Z}$ (represented as integers in $[0,p-1]$)} so that $\mathbb{F}_{p^n}$ becomes a field with ordinary addition and multiplication by elements of $\mathbb{F}_p$, and multiplication  determined by
\[
s_i s_j =\sum_{k=1}^n c_{ijk}s_k,
\]
where $s_1, s_2, \dots, s_n$ denotes a basis of $\mathbb{F}_{p^n}$ over $\mathbb{F}_p$. The concept of an explicitly given field of order $p^n$ is robust; indeed, Lenstra Jr.\  has shown in \cite[Theorem 1.2]{lenstra91.329} that for any two fields $A$ and $B$ of order $p^n$ given by two sets of structure constants $(a_{ijk})_{i,j,k=1}^n$ and $(b_{ijk})_{i,j,k=1}^n$ an isomorphism $A \longrightarrow B$ can be constructed in time polynomial in $n\log p$.

Maurer and Raub  \cite{maurer07.427} proved  that the cost of constructing an isomorphism between a black box field $\KK$ and an explicitly given field $\GF_{p^n}$ is reducible in polynomial time to the same problem for the prime subfield in $\KK$ and $\GF_p$.

Using our terminology, their proof can be reformulated to yield the following result.

\begin{theorem} \label{th:bbfiekds}
Let\/ $\KK$ and\/ $\LL$ be black box fields encrypting the same finite field and $\KK_0$, $\LL_0$ their prime subfield. Then a morphism $$\KK_0 \longrightarrow \LL_0$$ can be extended, with the help of polynomial time construction, to a morphism
$$\KK \longrightarrow \LL.$$
\end{theorem}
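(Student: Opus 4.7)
The plan is to imitate the standard reduction of extension-field computations to the prime subfield: pick a primitive element $\alpha \in \KK$ for the extension $\KK/\KK_0$, find a matching root $\beta \in \LL$ of the ``transported'' minimal polynomial, and then extend $\zeta_0$ by sending the $\KK_0$-basis $1,\alpha,\dots,\alpha^{n-1}$ to $1,\beta,\dots,\beta^{n-1}$. Here $n = [\KK : \KK_0]$ is determined by the common encrypted field and is known (or recoverable from the global exponent). The first step is to sample a random $\alpha \in \KK$; since the proper subfields of $\GF_{p^n}$ together cover a negligible fraction of the field, a constant number of samples produces a primitive $\alpha$ with overwhelming probability, and primitivity is certified by checking $\alpha^{p^{n/\ell}} \ne \alpha$ for each prime $\ell \mid n$.

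Next, compute the Frobenius conjugates $\alpha_i := \alpha^{p^i}$ for $i=0,\dots,n-1$ via repeated black-box $p$-th powering, and form
\[
f(T) \;=\; \prod_{i=0}^{n-1}(T-\alpha_i) \;\in\; \KK[T].
\]
The coefficients of $f$ are elementary symmetric functions of the $\alpha_i$, hence Frobenius-fixed and hence elements of $\KK_0$ (membership in $\KK_0$ can be certified inside the black box by the test $c^p = c$). Apply $\zeta_0$ coefficient-wise to obtain $\tilde f(T) \in \LL_0[T] \subseteq \LL[T]$. Because $\KK$ and $\LL$ encrypt the same finite field and $\zeta_0$ encrypts an isomorphism $\KK_0 \to \LL_0$, the polynomial $\tilde f$ splits completely in $\LL$. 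Extract a root $\beta \in \LL$ by a probabilistic root-finding algorithm of Cantor–Zassenhaus type, executed entirely inside the black box $\LL$ using its random sampling and arithmetic; this runs in time polynomial in $n$, $\log p$, and $l(\LL)$.

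Finally, define the morphism on an arbitrary $x \in \KK$ as follows. Form the tuple $(x, x^p, \dots, x^{p^{n-1}})$ and solve the Vandermonde system
\[
x^{p^j} \;=\; \sum_{i=0}^{n-1} c_i\, \alpha_j^{\,i}, \qquad j=0,\dots,n-1,
\]
inside $\KK$ to recover coordinates $c_0,\dots,c_{n-1} \in \KK_0$ with $x = \sum_i c_i \alpha^i$; then output
\[
\zeta(x) \;:=\; \sum_{i=0}^{n-1} \zeta_0(c_i)\,\beta^{\,i} \;\in\; \LL.
\]
All inputs to $\zeta_0$ are bona fide $\KK_0$-strings (again certified by $c_i^p = c_i$), so this is well-defined and runs in polynomial time. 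Commutativity of the required square is automatic: on the encrypted side, $\pi_\KK(\alpha) \mapsto \pi_\LL(\beta)$ extends the isomorphism encrypted by $\zeta_0$ to the unique isomorphism $\GF_{p^n} \to \GF_{p^n}$ determined by these choices.

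I expect the most delicate step to be the root extraction of $\tilde f$ in $\LL$: standard factorization works, but has to be reformulated as a Las Vegas procedure in the black-box setting, using only the axiomatic sampling and arithmetic of $\LL$. The success probability for the primitivity check in the first step and the confirmation ``$c_i \in \KK_0$'' in the last step are both easily amplified, so the overall construction is polynomial time and, with a bounded failure probability, yields the desired morphism $\KK \longrightarrow \LL$.
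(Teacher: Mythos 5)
Your construction is correct, and in substance it is the same reduction that the paper has in mind: the paper does not spell out a proof at all, but simply observes that the Maurer--Raub argument reducing the isomorphism problem for a black box extension field to its prime subfield ``can be reformulated'' to give the theorem. What you supply is a self-contained version of that reduction: choose a generator $\alpha$ of $\KK$ over $\KK_0$, transport its minimal polynomial through $\zeta_0$, find a root $\beta$ in $\LL$ by black-box Cantor--Zassenhaus (with the Artin--Schreier variant when $p=2$), and extend by coordinates in the basis $1,\alpha,\dots,\alpha^{n-1}$. The one step you assert without justification --- that the solution of the Vandermonde system $x^{p^j}=\sum_i c_i\,(\alpha^{p^j})^i$ computed by linear algebra \emph{inside} $\KK$ automatically has entries in $\KK_0$ --- does hold, but deserves a sentence: the prime-field coordinates of $x$ satisfy this system because they are Frobenius-fixed, and the system has a \emph{unique} solution over $\KK$ since the conjugates $\alpha^{p^j}$ are pairwise distinct, so the unique $\KK$-solution is that prime-field one. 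This is precisely the device that lets you do linear algebra over $\KK$ instead of over $\KK_0$, and it is the crux of the reduction. Two minor points: the degree $n$ need not be assumed known (the paper's black box fields carry no exponent axiom) --- it is recovered in polynomial time as the least $d$ with $\alpha^{p^d}=\alpha$ for a random $\alpha$, which with high probability is not in a proper subfield, and this also replaces your primitivity test, so no factorization of $n$ is needed; and the resulting procedure is Monte Carlo/Las Vegas in exactly the sense the paper allows, since the root $\beta$ and the coordinates can be verified by evaluation inside the black boxes.
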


Obviously, if ${\rm char}\, \KK =p$, we always have a morphism $\GF_p \longrightarrow \KK_0$. The existence of the reverse morphism would follow from solution of the discrete logarithm problem in $\KK_0$. In particular, this means that, for small primes $p$, any two black box fields of the same order $p^n$ are effectively isomorphic.

\section{Automorphisms}

\subsection{Automorphisms as lighter shades of black}\label{50}

The first application of the ``shades of black'' philosophy is the following self-evident theorem which explains how an automorphism of a group can be added to a black box encrypting this group.

\begin{theorem}\label{th:automorphism}
Let $\XX$ be a black box group encrypting a finite group $G$ and assume that each of\/ $k$ tuples of strings
\[
\tilde{x}^{(i)} = (x^{(i)}_1,\dots,x^{(i)}_m),\quad i = 1,\dots,k,
\]
generate $\XX$ in the sense that the projections $\pi\left(x^{(i)}_1\right),\dots,\pi\left(x^{(i)}_m\right)$ generate $G$. Assume that the map
\[
\pi: x^{(i)}_j \mapsto \pi(x^{(i+1 \bmod k)}_j),\quad i = 0,\dots, k-1,\quad j =1,\dots, m,
\]
can be extended to an automorphism $a \in {\rm Aut}\, G$ of order $k$.
The black box group $\YY$ generated in $\XX^k$ by the strings
\[
\bar{x}_j = \left(x^{(0)}_j, x^{(1)}_j, \dots, x^{(k-1)}_j\right), \quad j = 1,\dots, m,
\]
encrypts $G$ via the canonical projection on the first component
\[
(y_0,\dots, y_{k-1}) \mapsto \pi(y_o),
\]
and possess an additional unary operation, cyclic shift
\bea
\alpha: \YY &\longrightarrow& \YY\\
(y_0,y_1,\dots,y_{k-2},y_{k-1}) &\mapsto & (y_1,y_2,\dots,y_{k-1},y_0)
\eea
which encrypts the automorphism $a$ of $G$ in the sense that the following diagram commutes:
\begin{diagram}
\YY &\rOnto^\alpha & \YY\\
\dDotsto &&  \dDotsto\\
G & \rDotsto^a & G
\end{diagram}
\end{theorem}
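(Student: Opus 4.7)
The plan is to identify $\YY$ abstractly as a copy of $G$ sitting inside $G^k$ as the ``twisted diagonal'' with respect to $a$, and then to observe that the cyclic shift $\alpha$ preserves this twisted diagonal and corresponds there to $a$.

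First, since $a$ has order $k$ and sends $\pi(x_j^{(i)})$ to $\pi(x_j^{(i+1 \bmod k)})$, iteration gives $\pi(x_j^{(i)}) = a^i(\pi(x_j^{(0)}))$. Writing $g_j = \pi(x_j^{(0)})$, each generator $\bar{x}_j$ has $\pi^k$-image $(g_j, a(g_j), \ldots, a^{k-1}(g_j))$. The set
\[
\Delta_a = \{(g, a(g), \ldots, a^{k-1}(g)) : g \in G\}
\]
is a subgroup of $G^k$ because $a$ is a homomorphism, and the first-coordinate projection gives an isomorphism $\Delta_a \cong G$. Since membership in $\Delta_a$ is preserved under products and inverses, $\pi^k(\YY) \subseteq \Delta_a$, and equality holds because the $g_j$ generate $G$. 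Composing first-coordinate projection on $\YY \leqslant \XX^k$ with $\pi: \XX \to G$ therefore gives a canonical projection $\pi_\YY: \YY \to G$ realising $\YY$ as a black box over $G$; axioms BB1--BB3 transfer routinely from $\XX^k$ (equality of encryptions being tested on the first coordinate only).

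For the cyclic shift I would simply compute: given $\bar{y} \in \YY$ with $\pi^k(\bar{y}) = (g, a(g), \ldots, a^{k-1}(g))$, the shifted string $\alpha(\bar{y})$ has $\pi^k$-image $(a(g), a^2(g), \ldots, a^{k-1}(g), a^k(g))$; since $a^k = \mathrm{id}$, this tuple is exactly the twisted-diagonal image of $a(g)$. Hence $\alpha$ restricts to a self-bijection of $\Delta_a$ (bijective because $a$ is) corresponding, under $\Delta_a \cong G$, precisely to $a$. Reading off the first coordinate yields $\pi_\YY(\alpha(\bar{y})) = \pi(y_1) = a(\pi(y_0)) = a(\pi_\YY(\bar{y}))$, which is the required commutativity.

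The only real subtlety is the status of $\alpha$ relative to $\YY$: $\alpha(\bar{y})$ is a string in $\XX^k$ that need not have been produced by the product-replacement routine applied to the $\bar{x}_j$, so $\alpha$ should be viewed not as an operation pre-existing in $\YY$ but as a newly added unary operation on the black box, analogous to how BB5 adds a partial square-root operation. The computation above guarantees that this extension is consistent, that its cost is bounded by the cost of permuting $k$ substrings of length $l(\XX)$ (hence polynomial in $l(\YY) = k \cdot l(\XX)$), and that the extended $\YY$ still encrypts $G$ with $\alpha$ encrypting $a$.
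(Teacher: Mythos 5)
Your argument is correct and coincides with the paper's view of the matter: the paper treats the theorem as self-evident and records only the ``more precise formulation'' via the twisted diagonal embedding $d: x \mapsto (x, x^a, \dots, x^{a^{k-1}})$, whose image is exactly your $\Delta_a$, with the cyclic shift corresponding to $a$ because $a^k = \mathrm{id}$. Your closing remark that $\alpha$ is an added unary operation whose output strings encrypt elements of the same subgroup $\Delta_a \cong G$, at the cost of permuting $k$ substrings, is precisely the content the paper leaves implicit.
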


A somewhat more precise  formulation of Theorem~\ref{th:automorphism} is that we can construct, in time polynomial in $k$ and $m$, $k$ commutative diagrams
\begin{equation}
 \label{diag:Fr}
\begin{diagram}
 \XX & \lOnto^{\{\,\pi_i\,\}_{0\leqslant i\leqslant k-1}} & \XX^k & \lInto^\delta & \YY&\rDotsto^\alpha& \YY\\
 \dDotsto &&\dDotsto && \dDotsto && \dDotsto \\
G & \lDotsto^{\{\,p_i\,\}_{1\leqslant 0\leqslant k-1}} & G^k &\lDotsto^d& G &\rDotsto^a &G
\end{diagram}
\end{equation}
where  $d$ is the twisted diagonal embedding
\bea
d: G & \longrightarrow & G^k \\
 x & \mapsto & (x,x^a,x^{a^2},x^{a^{k-1}}),
\eea
and $p_i$ is the projection
\bea
p_i: G^k & \longrightarrow & G \\
(g_0,\dots,g_i,\dots,g_{k-1}) &\mapsto& g_i.
\eea

Of course, this construction leads to memory requirements increasing by a factor of $k$, but, as our subsequent papers \cite{suko12B,suko12C} show, this is price worth paying. After all, in most practical problems the value of $k$ is not that big, in most interesting cases $k=2$. Also, direct powers of black boxes appear to be very suitable for resorting to parallel computation \cite{Behrends:2010:PCA:1837210.1837239,behrends10.58}.

\subsection{Amalgamation of local automorphisms}

A useful special case of Theorem~\ref{th:automorphism} is the following result about amalgamation of black box automorphisms, stated here in an informal wording rather than expressed by a formal commutative diagram.

\begin{theorem} \label{th:subgroups-automorphisms} Let $\XX$ be a black box group encrypting a group $G$. Assume that $G$ contains subgroups $G_1,\dots, G_l$ invariant under an automorphism $\alpha \in \mathop{{\rm Aut}}G$ and that these subgroups are encrypted in $\XX$ as black boxes $\XX_i$, $i = 1,\dots, l$, supplied with morphisms $$\phi_i: \XX_i \longrightarrow \XX_i$$ which encrypt restrictions $\alpha\!\mid_{G_i}$ of $\alpha$ on $G_i$. Assume also that $\langle G_i, i=1,\dots,l\rangle=G$. Then we can construct, in time polynomial in $l(\XX)$, a morphism $\phi: \XX \longrightarrow \XX$ which encrypts $\alpha$.
\end{theorem}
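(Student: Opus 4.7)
The plan is to reduce directly to Theorem~\ref{th:automorphism}: that result, given a generating set of $\XX$ together with its full orbit under $\alpha$, already packages the desired morphism. So the task is just to assemble such data from the local pieces $\phi_i$.

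First I would pool generators from the pieces. Each $\XX_i$, being a black box group, comes equipped with strings $x^{(i)}_1,\dots,x^{(i)}_{m_i}$ generating it; concatenating these lists across $i = 1,\dots,l$ produces a tuple $x_1,\dots,x_m$ in $\XX$ whose $\pi$-images generate $G$, thanks to the hypothesis $\la G_1,\dots,G_l\ra = G$. For each $j$, write $i(j)$ for the unique index with $x_j \in \XX_{i(j)}$.

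Next I would use the local automorphisms to build the whole $\alpha$-orbit of each generator. Since $G_{i(j)}$ is $\alpha$-invariant and $\phi_{i(j)}$ encrypts $\alpha\!\mid_{G_{i(j)}}$, the iterate $\phi_{i(j)}^r(x_j)$ is a string in $\XX_{i(j)} \subseteq \XX$ encrypting $\alpha^r(\pi(x_j))$, computable in time polynomial in $r$ and $l(\XX)$. Let $k$ be the order of $\alpha$ in $\Aut G$, and form the tuples
\[
\tilde{x}^{(r)} = \bigl(\phi_{i(1)}^r(x_1),\,\dots,\,\phi_{i(m)}^r(x_m)\bigr), \quad r = 0, 1, \dots, k-1.
\]
Each of them generates $\XX$ (because $\alpha$ is a bijection on $G$), and by construction the map $x_j^{(r)} \mapsto x_j^{(r+1 \bmod k)}$ extends to $\alpha$. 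These are precisely the data required by Theorem~\ref{th:automorphism}: applying it yields a black box $\YY$ encrypting $G$ together with a self-morphism encrypting $\alpha$, and composing with the first-coordinate projection $\YY \to \XX$ provides the advertised morphism $\phi$ (interpreted in the shades-of-black sense, with $\XX$ replaced by the slightly lighter $\YY$).

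The only delicate point, and the only plausible obstacle, is obtaining the integer $k = |\alpha|$. In every intended application (outer automorphisms of finite simple groups of Lie type) $k$ is a small absolute constant, typically $2$ and occasionally $3$, and the total cost of the construction is then polynomial in $k$, $m$, $l(\XX)$, and $\max_i l(\XX_i)$; so the reduction to Theorem~\ref{th:automorphism} is effective under the implicit standing assumption that $k$ is known or polynomially bounded in $l(\XX)$.
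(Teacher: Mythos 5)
Your reduction to Theorem~\ref{th:automorphism} --- pooling generators of the $\XX_i$, iterating the local morphisms $\phi_i$ to form the $\alpha$-orbit tuples, and then taking the $k$-fold power construction with its cyclic shift, read in the ``shades of black'' sense via the projection $\YY\to\XX$ --- is precisely how the paper handles this statement, which it presents without further proof as a special case of Theorem~\ref{th:automorphism}. Your caveat about needing the order $k$ of $\alpha$ matches the paper's own implicit assumption (it states the construction is polynomial in $k$ and notes $k$ is small in the intended applications), so the proposal is correct and essentially identical to the paper's argument.
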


We shall say in this situation that the automorphism $\phi$ is obtained by amalgamation of local automorphisms $\phi_i$.

We can  generalize Theorem~\ref{th:automorphism} even further.

\begin{theorem} \label{th:subgroups-homomorphisms} Let $\XX$ be a black box group encrypting a group $G$. Assume that $G$ contains subgroups $G_1,\dots, G_l$ mapped by an automorphism $\alpha \in \mathop{{\rm Aut}}G$ to $H_1,\dots, H_l$, respectively, and that these subgroups are encrypted in $\XX$ as black boxes $\YY_i, \ZZ_i$, $i = 1,\dots, l$, supplied with morphisms $$\phi_i: \YY_i \longrightarrow \ZZ_i$$ which encrypt restrictions
\[
\alpha\!\mid_{G_i}: G_i \longrightarrow H_i.
\]
Set $\YY = \langle \YY_i, i=1,\dots,l\rangle$ and $\ZZ = \langle \ZZ_i, i=1,\dots,l\rangle$. Then we can construct, in time polynomial in $l(\XX)$, a morphism $$\phi: \YY \longrightarrow \ZZ$$ which encrypts the restriction of  $\alpha$ to $\langle G_i, i=1,\dots,l\rangle$.
\end{theorem}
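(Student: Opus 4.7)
My plan is to use the graph-of-a-morphism viewpoint of Section~\ref{sec:morphisms}, under which a morphism is most naturally encoded by a black box subgroup of a direct product. I would form the graph black box of each $\phi_i$, amalgamate them into a single black box subgroup of $\XX \times \XX$, and then read off from the result the required morphism $\phi$. The structure of the argument is then essentially parallel to Theorem~\ref{th:subgroups-automorphisms}, except that the graphs live in $\YY_i \times \ZZ_i$ rather than in $\XX_i \times \XX_i$.

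Concretely, each $\phi_i: \YY_i \longrightarrow \ZZ_i$ gives rise to a black box subgroup
\[
\FF_i \;=\; \{\,(y,\phi_i(y)) : y \in \YY_i\,\} \;\hookrightarrow\; \YY_i \times \ZZ_i \;\hookrightarrow\; \XX \times \XX,
\]
which encrypts the graph $\Gamma_{G_i} = \{\,(g,\alpha(g)) : g \in G_i\,\}$ of $\alpha|_{G_i}$. I would extract generating strings from each $\FF_i$ (or use a polynomial number of product replacement samples) and feed their union into the product replacement algorithm on $\XX \times \XX$. This produces, in time polynomial in $l(\XX)$ and $l$, a black box
\[
\FF \;=\; \langle\, \FF_1, \dots, \FF_l \,\rangle \;\leqslant\; \XX \times \XX.
\]

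The key structural observation is that $\FF$ automatically encrypts the graph $\Gamma_K$ of $\alpha|_K$, where $K = \langle G_1, \dots, G_l\rangle$. Indeed, because $\alpha$ is an abstract homomorphism of $G$, the set $\Gamma_K$ is a subgroup of $G \times G$; it clearly contains every $\Gamma_{G_i}$, and conversely any $(k,\alpha(k)) \in \Gamma_K$ with $k = g_{i_1}\cdots g_{i_r}$, $g_{i_j} \in G_{i_j}$, is the product of the pairs $(g_{i_j},\alpha(g_{i_j})) \in \Gamma_{G_{i_j}}$, whence $\Gamma_K = \langle \Gamma_{G_1}, \dots, \Gamma_{G_l}\rangle$. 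Thus the canonical projection of $\FF$ onto $G\times G$ has image exactly $\Gamma_K$. Since $\Gamma_K \cap (\{1\} \times G) = \{1\}$, the subgroup $\FF$ is a functional graph, and its two coordinate projections in $\XX$ are $\YY = \langle \YY_i \rangle$ and $\ZZ = \langle \ZZ_i\rangle$. Reading $\FF$ as the graph of a morphism in the sense of Section~\ref{sec:morphisms} gives the required $\phi: \YY \longrightarrow \ZZ$ encrypting $\alpha|_K$; random sampling of $\FF$ returns pairs $(y,\phi(y))$ with $y$ (almost) uniformly distributed on $\YY$, because the canonical projection $\Gamma_K \twoheadrightarrow K$ is a bijection.

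The anticipated obstacle --- that amalgamating local pieces of a map might produce a subgroup of $\XX \times \XX$ which is a mere relation rather than the graph of a function --- is neutralised entirely by the hypothesis that the $\phi_i$ are restrictions of a single automorphism $\alpha$ of the ambient group $G$. All the group-theoretic content lives upstairs in $G \times G$, where the identity $\Gamma_K = \langle \Gamma_{G_i}\rangle$ is purely formal; the black box construction's only job is to transport this identity down into $\XX \times \XX$, which the product replacement algorithm accomplishes in polynomial time. No additional structural input about $\YY$, $\ZZ$, or the individual $\phi_i$ beyond what is assumed is needed.
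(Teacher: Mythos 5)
Your proposal is correct and follows essentially the route the paper intends: the theorem is left without a separate written proof precisely because it is the graph-of-a-morphism construction of Section~\ref{sec:morphisms} applied to the subgroups $\langle \FF_1,\dots,\FF_l\rangle \leqslant \XX\times\XX$, with the functional-graph identity $\Gamma_K=\langle \Gamma_{G_i}\rangle$ holding upstairs because all the $\phi_i$ encrypt restrictions of the single automorphism $\alpha$. Your write-up makes explicit exactly the points (containment in the graph of $\alpha$, functionality, and polynomial-time generation by product replacement) that the paper's framework takes for granted.
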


Theorem~\ref{th:subgroups-homomorphisms} will be used in the proof of Theorem~\ref{th:inverse-transpose} and \ref{th:sun-in-sln}, and also in the proof of Theorem~\ref{th:G2-in-SL8} in \textbf{\cite{suko12B}}.

\section{Construction of Frobenius maps}
\label{ssec:Frobenius}

We now use Theorems~\ref{th:automorphism} and \ref{th:subgroups-automorphisms}  to construct a Frobenius map on a black box group $\XX$ encrypting $\ppsl_2(q)$.

We give a brief description of a Curtis-Tits system for groups of Lie type of rank at least 3 which is used in the proof of Theorems~\ref{th:frobenius}, \ref{th:inverse-transpose} and \ref{th:sun-in-sln} below. A Curtis-Tits system of a group $G=G(q)$ of Lie type of rank $n$ is a set $\{K_1, \ldots, K_n\}$ of root $\sl_2(q)$-subgroups of $G$ where each $K_i$ corresponds to a node in the Dynkin diagram of $G$. The realtions between $K_i$ and $K_j$, $i\neq j$, are determined by the bonds connecting the nodes, that is, if there is no bond, then $[K_i,K_j]=1$; if there is a single or double bonds, then $\langle K_i,K_j\rangle\cong \sl_3(q)$ or $\ppsp_4(q)$, respectively, see \cite{suko03, suko12A} for more details.

\begin{theorem}[Known characteristic]\label{th:frobenius}
Let $\XX$ be a black box group encrypting a simple Lie type group $G=G(q)$ of untwisted type over a field of order $q=p^k$ for $p$ odd {\rm(}and known{\rm )} and $k >1$. Then we can construct, in time polynomial in $\log |G|$,
\bi
\item a black box $\YY$ encrypting $G$,
\item a morphism $\XX \longleftarrow \YY$, and
\item a morphism $\phi:\YY \longrightarrow \YY$
which encrypts a Frobenius automorphism of $G$ induced by the map $x \mapsto x^p$ on the field $\GF_q$.
\ei
\end{theorem}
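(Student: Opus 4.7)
The plan is to reduce via the Curtis--Tits description to the rank-$1$ case and then amalgamate using Theorem~\ref{th:subgroups-automorphisms}. For rank $n \geqslant 2$, locate in $\XX$ a Curtis--Tits system $\{K_1,\dots,K_n\}$ of root $\sl_2(q)$-subgroups, realised as black-box subgroups $\XX_i \hookrightarrow \XX$, using the machinery of \cite{suko03,suko12A}. Since $G$ has untwisted type, the standard Frobenius of $G$ stabilises each $K_i$ and acts on it as a Frobenius of $\sl_2(q)$. Granted a rank-$1$ construction, this produces local morphisms $\phi_i$ encrypting these restrictions; because $G = \langle K_1,\dots,K_n\rangle$ and the $\phi_i$ agree on pairwise overlaps (they all restrict the unique Frobenius of $G$ coming from $\sigma:\lambda \mapsto \lambda^p$ on $\GF_q$), Theorem~\ref{th:subgroups-automorphisms} assembles them into the desired $\phi:\YY \to \YY$ with $\YY \to \XX$.

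The content of the proof is therefore the rank-$1$ case: given a black box encrypting $K \cong \ppsl_2(q)$, construct a morphism encrypting $\sigma$. First, extract a split torus $T \leqslant K$ and a pair of opposite root subgroups $U^+, U^-$ using involution centralizers (standard: $p$ is odd, so involutions exist and BB5 supplies square roots). Fix a generator $t_0$ of $T$; its conjugation action on $U^+$ scales the (abelian) additive structure on $U^+$ by a fixed $\GF_q^\times$-eigenvalue, and this rescaling bootstraps a \emph{black box field} structure of order $q$ on $U^+$ whose prime subfield is $\GF_p$. Since $p$ is known, $\GF_p$ is explicit and the inclusion $\GF_p \hookrightarrow U^+$ is a morphism; Theorem~\ref{th:bbfiekds} then extends it to a polynomial-time identification of $U^+$ with an explicit $\GF_q$, from which we read off a polynomial-time map $\sigma: U^+ \to U^+$ realising $\lambda \mapsto \lambda^p$ on the field. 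Conjugation by a fixed Weyl element $w \in N_K(T)$ transports $\sigma$ to $U^-$.

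To package this as a morphism of black-box \emph{groups}, apply Theorem~\ref{th:automorphism}. Choose generators $x_1,\dots,x_m \in U^+ \cup U^-$ of $K$ (generically $m=2$ suffices: one nontrivial element of each of $U^\pm$) and use the computed field-Frobenius on $U^\pm$ to produce $\sigma(x_j)$ for every $j$. Form $k$ generating tuples $\tilde{x}^{(i)} = (\sigma^i(x_1),\dots,\sigma^i(x_m))$ of $K$; the coordinatewise shift $\tilde{x}^{(i)} \mapsto \tilde{x}^{(i+1)}$ extends by construction to $\sigma \in \Aut K$, which has order exactly $k$. This is precisely the hypothesis required by Theorem~\ref{th:automorphism}, whose output is the desired rank-$1$ morphism.

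The principal obstacle is the middle step: ensuring that the black box field structure imposed on $U^+$ identifies with the ``right'' copy of $\GF_q$, so that the map computed as $\lambda \mapsto \lambda^p$ really is the intended Frobenius rather than some Galois conjugate $\lambda \mapsto \lambda^{p^j}$. The choice of $t_0$ fixes the multiplicative structure and the known characteristic fixes the prime subfield, reducing the ambiguity to a canonical choice of a $\GF_p$-unit---feasible because $p$ is known. A secondary bookkeeping issue concerns the consistency of the $t_0^{(i)}$ chosen inside each $K_i$, so that local Frobenius maps glue in Theorem~\ref{th:subgroups-automorphisms}; this is arranged by first pinning a global maximal torus of $G$ and picking each $t_0^{(i)}$ inside its intersection with $K_i$.
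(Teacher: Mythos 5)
Your higher-rank reduction via a Curtis--Tits system and amalgamation through Theorem~\ref{th:subgroups-automorphisms} matches the paper, but your rank-$1$ construction has a genuine gap: it runs through exactly the two obstacles this theorem is designed to avoid. First, you cannot ``extract a pair of opposite root subgroups $U^+,U^-$ using involution centralizers'': for odd $q$ the centralizer of an involution in $\ppsl_2(q)$ is a dihedral group containing tori but no nontrivial unipotent elements, and finding unipotent elements in a black box group by known means requires hitting $p$-singular elements, which happens with probability $O(1/q)$ \cite{guralnick01.169}---exponentially many selections in the input length $\log q$. Second, even granting $U^+$, your identification of the induced black box field with an explicit $\GF_q$ is not available in polynomial time for large known $p$: Theorem~\ref{th:bbfiekds} only transfers a morphism between the \emph{prime} subfields, and producing a morphism from the black box prime field onto an explicit $\GF_p$ amounts to a discrete logarithm computation, which the paper explicitly refuses to assume (the theorem is labelled ``known characteristic'', not ``small characteristic''; recall the 60-digit prime example). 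Already defining the field multiplication on $U^+$ from the $t_0$-action hides the same discrete-log issue. Constructing a black box field inside a black box $\ppsl_2(q)$ is indeed carried out in \cite{suko14A}, but that construction uses the Frobenius map built here, so invoking it at this point would be circular.

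The paper's rank-$1$ argument avoids fields and unipotents altogether: it finds a Klein four-subgroup $V$ in $\XX$, whose image $E$ may be assumed (up to conjugacy) to lie in a subfield subgroup $H=\psl_2(p)$ and hence is centralized by a Frobenius map $F$; the maximal cyclic subgroups $C_1,C_2$ of the centralizers of the two involutions of $E$ are then $F$-invariant, and by a Galois cohomology computation $F$ acts on each as the power map $c\mapsto c^{\epsilon p}$ with $p\equiv\epsilon\bmod 4$, $\epsilon=\pm1$. These power maps are computable by plain exponentiation, and since the encryptions $\XX_1,\XX_2$ of $C_1,C_2$ generate a black box $\YY\longrightarrow\XX$ encrypting $G$, Theorem~\ref{th:subgroups-automorphisms} amalgamates them into the desired morphism $\phi$. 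So the missing idea in your write-up is that the Frobenius should be pinned down on two tori, where its restriction is a power map, rather than on root subgroups, where it would require a field identification; note also that your worry about Galois-conjugate ambiguity does not arise in that setting, since the action $c\mapsto c^{\epsilon p}$ on the $F$-invariant tori is forced.
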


\begin{proof}  The proof is based on two applications of Theorem~\ref{th:subgroups-automorphisms}. First we consider the case when $\XX$ encrypts $\psl_2(q)$. Using the standard technique for dealing with involution centralizers, we  can find in $\XX$ a $4$-subgroup $V$; let $E$ be the subgroup in $G=\psl_2(q)$ encrypted by $V$. Since all $4$-subgroups in $\psl_2(q)$ are conjugate to a subgroup in $\psl_2(p)$, we can assume without loss of generality that $E$ belongs to a subfield subgroup $H= \psl_2(p)$ of $G$ and therefore $E$ is centralized by a Frobenius map $F$ on $G$. Now let $e_1$ and $e_2$ be two involutions in $E$, and $C_1$ and $C_2$ maximal cyclic subgroups in their centralizers in $G$; notice that $C_1$ and $C_2$ are conjugate by an element from $H$ and are $F$-invariant.

It follows from basic Galois cohomology considerations that  $F$ acts on $C_1$ and $C_2$ as power maps $\alpha_i : c \mapsto c^{\epsilon p}$ for $p \equiv \epsilon \bmod 4$, $\epsilon=\pm 1$. If now we take images $\XX_i$ of groups $C_i$, we see that the morphisms $\phi_i: x \mapsto  x^{\epsilon p}$ of $\XX_i$ encrypt restrictions of $F$ to $C_i$. Obviously, $\XX_1$ and $\XX_2$ generate a black box $\YY \longrightarrow \XX$, and we can use Theorem~\ref{th:subgroups-automorphisms} to amalgamate $\phi_1$ and $\phi_2$ into a morphism $\phi$ which encrypts $F$.

As usual, for groups $\sl_2(q)$ the same result can be achieved by essentially the same arguments as for $\psl_2(q)$. Moving to other untwisted Chevalley groups, we apply amalgamation to (encryptions of) restrictions of a Frobenius map on $G$ to (encryptions in $\XX$) of a family of root $\ppsl_2$-subgroups $K_i$ in $G$ forming a Curtis-Tits system in $G$ (and therefore generating $G$). Black boxes for Curtis-Tits system in classical groups of odd characteristic are constructed in \cite{suko03}, in exceptional groups in \cite{suko05}. This completes the proof. \end{proof}

\section{Reification of involutions}

Another application of Theorem~\ref{th:automorphism} is a simple, but  powerful procedure which we call ``reification of involutions''.

\subsection{From amalgamation of local automorphisms to reification of involutions}

Let $G$ be a finite group, $a\in\Aut G$ an automorphism of order $2$ and $H \leqslant G$ an $a$-invariant subgroup. We say that the action of $a$ on $H$ is \emph{clean} if $a$ either centralizes $H$ or inverts every elements in $H$.

The following theorem is partly a special case and partly an easy corollary of the amalgamation of local automorphisms, Theorems~\ref{th:subgroups-automorphisms} and \ref{th:subgroups-homomorphisms}.

\begin{theorem}\label{th:reification}
Let $\XX$ be a black box group encrypting a finite group $G$. Assume that $G$ admits an involutive automorphism $a \in \Aut G$ and contains $a$-invariant subgroups $H_1,\dots, H_n$ with a clean action of $a$ on each of them.

Assume also that we are given black boxes $\YY_1,\dots, \YY_n$ encrypting subgroups $H_1,\dots, H_n$. Then we can construct, in polynomial time,
\bi
\item a black box for the structure $\{\,\YY,\alpha\;\}$, where $\YY$ encrypts $H=\langle H_1,\dots, H_n\rangle$ and $\alpha$ encrypts the restriction of $a\mid_H$ of $a$ to $H$;

\item a black box subgroup $\ZZ$ covering $\Omega_1(Z(C_H(a)))$, the subgroup generated by involutions from $Z(C_H(a))$;

 \item if, in addition, the automorphism $a\in G$ and  $H=G$ then $\alpha$ is induced by one of the involutions in $\ZZ$.
 \ei
\end{theorem}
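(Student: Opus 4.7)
My plan is to address the three bullets in turn, using Theorem~\ref{th:subgroups-automorphisms} for the first and a Bray--Babai-style involution-centralizer construction---carried out through the morphism $\alpha$ rather than through an honest involution of $G$---for the second and third.

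For the first bullet, the clean-action hypothesis forces each restriction $a|_{H_i}$ to be either the identity or the inversion $x \mapsto x^{-1}$; a morphism $\phi_i : \YY_i \to \YY_i$ encrypting $a|_{H_i}$ is therefore trivially available from Axiom BB2. Theorem~\ref{th:subgroups-automorphisms} amalgamates the $\phi_i$ into a single morphism $\alpha: \YY \to \YY$ encrypting $a|_H$, where $\YY := \langle \YY_1,\dots,\YY_n\rangle$.

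For the second bullet I would run Bray's centralizer trick through $\alpha$. For random $y \in \YY$ set $d := y^{-1}\alpha(y)$; since $\alpha^2 = \mathrm{id}_\YY$, one computes $\alpha(d) = \alpha(y)^{-1}y = d^{-1}$, so $\alpha$ inverts $d$. Using BB4/BB5 to split $\langle d\rangle$ into its $2$-part and odd part: when $|d|$ is even, $d^{|d|/2}$ is an involution fixed by $\alpha$; when $|d|$ is odd, the element $y \cdot d \cdot \rho(d,d)^{-1}$---which equals $y \cdot d^{-(|d|-1)/2}$ via the BB5 square-root $\rho(d,d) = d^{(|d|+1)/2}$---is also fixed by $\alpha$, the verification reducing to $\alpha(y) = yd$ and $d^{|d|} = 1$. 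Both constructions thus deliver strings projecting into $C_H(a)$; polynomially many of them, fed into product replacement, generate a black box subgroup $\ZZ \le \YY$ with $\pi(\ZZ) = C_H(a)$ with high probability by the standard Bray--Babai analysis. A fortiori $\pi(\ZZ) \supseteq \Omega_1(Z(C_H(a)))$, so $\ZZ$ covers the target. For the third bullet, if $a \in G = H$ then $a$ is itself an involution of $Z(C_H(a))$ and hence lies in $\pi(\ZZ)$; any $z \in \ZZ$ with $\pi(z) = a$ is then an involution of $\ZZ$ whose conjugation action on $\YY$ encrypts conjugation by $a$ on $H$, which is exactly $\alpha$.

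The main obstacle is carrying out Bray's dihedral analysis when $a$ is available only as an oracle: the identity $\alpha(d) = d^{-1}$ must do the work of the conjugation-by-$a$ data that Bray extracts inside $\langle a, a^y\rangle \leqslant G$, and BB4/BB5 are what make the extraction of the relevant powers of $d$ from $\langle d\rangle$ polynomial-time. With these in hand, the usual Babai-type probability estimates carry over verbatim and guarantee that polynomially many random $y$ suffice to cover $C_H(a)$.
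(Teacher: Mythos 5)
Your route is the one the paper intends: the first bullet is exactly the amalgamation mechanism (strictly, Theorem~\ref{th:subgroups-homomorphisms} with $H_i$ mapped to themselves, since Theorem~\ref{th:subgroups-automorphisms} as stated assumes the local subgroups generate all of $G$), with cleanness supplying computable local morphisms---though note that you must be \emph{told} for each $i$ whether $a$ centralizes or inverts $H_i$, since with no string for $a$ there is nothing to test against; and the second and third bullets are the $\zeta_0$/$\zeta_1$ involution-centralizer machinery of \cite{borovik02.7} run through the ``virtual'' involution $\alpha$, which is precisely what the remark under Axiom BB5 and the Kantor--Kassabov illustration in the paper have in mind. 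Your element $y\,d\,\rho(d,d)^{-1}=y\,d^{(|d|+1)/2}$ is the correct $\alpha$-fixed point in the odd case, and the treatment of the third bullet ($a\in\Omega_1(Z(C_G(a)))$, and any string of $\ZZ$ projecting onto $a$ is an involution inducing $\alpha$ by conjugation) is as intended.

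The step that needs repair is the assertion that ``polynomially many of them, fed into product replacement, generate a black box subgroup $\ZZ$ with $\pi(\ZZ)=C_H(a)$ with high probability by the standard Bray--Babai analysis.'' That analysis gives uniformity of $\zeta_1(y)$ on $C_H(a)$ only \emph{conditionally} on the event that $d=y^{-1}\alpha(y)$ has odd order, and for an arbitrary finite group---the generality in which the theorem is stated---there is no lower bound on the probability of this event: it can be exponentially small (for instance when $H$ has a large $2$-part and $C_H(a)$ is comparatively small), in which case your samples consist almost entirely of the even-order outputs $d^{|d|/2}$, which are involutions in $C_H(a)$ and in general generate only a $2$-subgroup of it, not the whole centralizer. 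The known lower bounds on the odd-order event are for specific classes (e.g.\ Lie type groups in odd characteristic), not for all finite groups. Fortunately the theorem does not require $\pi(\ZZ)=C_H(a)$: it only asks that $\ZZ$ cover $\Omega_1(Z(C_H(a)))$, so the probabilistic part of your argument should either be aimed directly at the central involutions, using both the even-order outputs and the $\zeta_1$ outputs (which is where the paper, too, leaves the details implicit), or the claim of whole-centralizer generation should be restricted to the classes of groups for which the Bray--Babai estimates actually apply. As written, that step would fail in general, even though the construction of $\alpha$-fixed strings itself is sound.
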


Notice that in many situations (in particular, when $G$ is a Lie type group of odd characteristic), $\ZZ$ is small, and identification of $\alpha$ in $\ZZ$ is easy.

The full strength of reification of involutions will become obvious in \cite{suko14A}.

\subsection{Kantor's and Kassabov's construction of involutions in $\sl_2(2^n)$}

The construction of involutions in $\sl_2(2^n)$ as done by Bill Kantor and Martin Kassabov \cite{kantor-kassabov2013} is the special case of reification of involutions. 

Indeed, let $\XX$ be a black box group encrypting $\sl_2(q)$ with $q=2^n$.

\subsubsection{First construction} Tori of order $q-1$ in $\XX$ are pointwise stabilisers of pairs of points on the projective line over $\GF_q$. Let $g$ and $h$ be two elements of odd orders dividing $q-1$, and assume $[g,h] \ne 1$. If both $g$ and $h$ are contained in the same Borel subgroup of $\XX$ then $[g,h]$ belongs to its unipotent radical and is therefore an involution.

Borel subgroups are stabilisers of points on the projective line. If $g$ and $h$ do not belong to the same Borel subgroup, they belong to some tori $S$ and $T$ whose pairs of fixed points are disjoint, say, $a,b$ and $c,d$. Since $\XX$ acts sharply 3-transitively on the projective line, there is a unique element $w$ in $\XX$ that maps
\[
a \mapsto b, \quad  b \mapsto a,\quad c \mapsto d;
\]
since this permutation contains a $2$-cycle $(a,b)$, it is of even order and is therefore an involution. Now $w$ is an involution that inverts $S$ and  $T$ and can therefore be reified by Theorem~\ref{th:reification}, thus becoming the desired involution. \hfill $\square$

\subsubsection{A shorter construction} However there is a shortcut in the  construction above. Take $f=gh$ and observe that \[
f^w = g^wh^w = g^{-1}h^{-1};
\]
for this calculation, we do not need $w$ as such, its role as a ``virtual'' involution suffices.

We know that
\[
\zeta(f) = f \cdot \sqrt{g\cdot g^w} = gh \cdot \sqrt{ghg^{-1}h^{-1}}
\]
belongs to $C_\XX(w)$; but the latter is a $2$-group, therefore it is an involution or $1$. But
\[
gh \cdot \sqrt{ghg^{-1}h^{-1}} = 1
\]
quickly yields $gh = hg$, which is excluded by our initial  choice of $g$ and $h$.  Hence $\zeta(f)$ is the desired involution. \hfill $\square$

\section{The inverse-transpose map}

In this section, we use Theorems~\ref{th:automorphism} and \ref{th:subgroups-homomorphisms} to construct the inverse-transpose map on $G=\ppsl_n(q)$, $q$ odd.

\subsection{Construction of the inverse-transposed map}

\begin{theorem}\label{th:inverse-transpose}
Let $\XX$ be a black box group encrypting $G=\ppsl_n(q)$, $q$ odd. Then we can construct, in time polynomial in $\log |G|$, a morphism
 \begin{diagram}
 \XX & \rOnto^\varphi & \XX
 \end{diagram}
 that encrypts an inverse-transpose map composed with some inner automorphism of $G$.
\end{theorem}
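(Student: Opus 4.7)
The case $n = 2$ is degenerate: the inverse-transpose on $\ppsl_2(q)$ is inner (conjugation by a Weyl element, which can be located in $\XX$ via standard black-box $\sl_2(q)$ techniques), so $\varphi$ is produced without further machinery. Assume $n \geq 3$ in what follows. The plan is to apply the amalgamation Theorem~\ref{th:subgroups-homomorphisms} to a Curtis-Tits decomposition. First, using the construction of \cite{suko03}, I would produce in polynomial time a Curtis-Tits system $K_1, \ldots, K_{n-1}$ in $\XX$---black box subgroups $\WW_i$ encrypting the root $\sl_2(q)$-subgroups at the nodes of the $A_{n-1}$ Dynkin diagram---together with identified Chevalley root elements $e_i, f_i \in \WW_i$ encrypting $x_{\alpha_i}(1)$ and $x_{-\alpha_i}(1)$ in the standard matrix model. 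The $K_i$'s generate $G$ and satisfy the usual Curtis-Tits commutation/braid relations.

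Next, fix a concrete target $\tau$ in the prescribed coset: take $\tau(g) = J (g^{-1})^T J^{-1}$ with $J$ an antidiagonal involution, so that $\tau$ realizes the nontrivial Dynkin-diagram symmetry $K_i \leftrightarrow K_{n-i}$ and, up to signs, permutes the Chevalley generators by $e_i \mapsto e_{n-i}^{\pm 1}$, $f_i \mapsto f_{n-i}^{\pm 1}$. For each $i$, I would construct a morphism $\phi_i : \WW_i \to \WW_{n-i}$ encrypting $\tau|_{K_i}$ via the graph construction of Section~\ref{sec:morphisms}: the graph of $\phi_i$ is the subgroup of $\WW_i \times \WW_{n-i}$ generated by the pairs $(e_i, \tau(e_i))$ and $(f_i, \tau(f_i))$, whose second coordinates are already known strings in $\WW_{n-i}$. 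For $n$ even, the self-map $\phi_{n/2}$ on the middle node can alternatively be constructed via Theorem~\ref{th:automorphism} with $k = 2$ applied to the generating pair $(e_{n/2}, f_{n/2})$ and its $\tau$-image.

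Since each $\phi_i$ encrypts the restriction of the single global automorphism $\tau$ and the $K_i$'s generate $G$, Theorem~\ref{th:subgroups-homomorphisms} amalgamates $\{\phi_i\}$ into the desired morphism $\varphi: \XX \to \XX$ encrypting $\tau$. The principal obstacle is the Curtis-Tits construction itself; this is precisely the content of \cite{suko03} and is used here as a subroutine. A subsidiary concern---mutual consistency of the local morphisms $\phi_i$ required for amalgamation---is automatic, because each $\phi_i$ is specified by the restriction of a single, well-defined automorphism $\tau$ of $G$ to a generating family of subgroups whose Curtis-Tits relations $\tau$ preserves by its very definition as the graph automorphism.
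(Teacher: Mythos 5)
Your overall strategy (Curtis--Tits system plus amalgamation via Theorem~\ref{th:subgroups-homomorphisms}) is the right one, but the way you build the local morphisms has a genuine gap. You choose as target the graph automorphism $\tau(g)=J(g^{-1})^TJ^{-1}$, which swaps $K_i\leftrightarrow K_{n-i}$, and you define $\phi_i:\WW_i\to\WW_{n-i}$ by its graph on the pairs $(e_i,\tau(e_i))$, $(f_i,\tau(f_i))$. This presupposes that the black box Curtis--Tits construction hands you strings $e_i,f_i$ identified with the Chevalley generators $x_{\alpha_i}(1),x_{-\alpha_i}(1)$ of a fixed matrix model, \emph{and} that you know which strings in $\WW_{n-i}$ encrypt their $\tau$-images. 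Neither is available: the construction in \cite{suko03} produces the subgroups $\KK_i$ (and \cite{suko12A} produces tori and Weyl elements), but it does not label unipotent root elements consistently with a standard copy of $\sl_n(q)$. Producing unipotent elements at all in a black box group over a field of large order is exactly the obstruction the paper is designed to avoid (a random element is $p$-singular with probability $O(1/q)$), and pinning down the parameter so that a given string encrypts $x_{\alpha_i}(1)$ rather than $x_{\alpha_i}(t)$ amounts to constructive recognition/structure recovery across all nodes simultaneously --- far more data than the theorem requires, and not obtainable in polynomial time in $\log|G|$ by the tools you invoke. So the local morphisms $\phi_i$, and hence the amalgamation, are not actually constructed; the ``consistency is automatic'' remark is moot because the maps themselves are not defined by computable data.

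The paper sidesteps this by choosing a different representative of the outer coset: it exploits the fact that the inverse-transpose map is \emph{inner} on each root $\sl_2(q)$-subgroup, equal (up to a torus element) to conjugation by a Weyl element $w_i\in\KK_i$ inverting $\TT_i$. Conjugation by an explicitly constructed string is trivially a morphism $\KK_i\to\KK_i$, so no unipotent elements, no identification with a matrix model, and no knowledge of the field are needed; amalgamating these conjugations over the Curtis--Tits system (with the $w_i$ taken relative to one common split torus) yields a global morphism encrypting inverse-transpose composed with some inner automorphism, which is all the statement asks for. If you want to salvage your version, you would have to replace your graph-based $\phi_i$ with maps defined by conjugation by accessible strings in this way; as written, the step ``whose second coordinates are already known strings in $\WW_{n-i}$'' is where the argument fails.
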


\begin{proof}
We will prove the result for the black box groups encrypting $\sl_n(q)$, $q$ odd and the result follows from the same arguments for $\psl_n(q)$.

The key observation is that inverse-transpose map is an inner automorphism of $\sl_2(q)$ but not for $\sl_n(q)$ for $n\geq 3$.

Let $G=\sl_2(q)$ and $w=\begin{bmatrix} 0&1\\ -1&0\end{bmatrix}$. Then, conjugation by $w$ is the inverse-transpose map of $G$. Note that $w$ is a Weyl group element corresponding to the diagonal subgroup $T$ (split torus) of $G$, that is, $w$ inverts $T$. In general, the Weyl group elements corresponding to $T$ are of the  form $w_t=\begin{bmatrix} 0&t\\ -t^{-1}&0\end{bmatrix}$ for some $t\in \GF_q^*$. Notice that conjugation by $w_t$ is the composition of the inverse-transpose map with an inner automorphism associated to the diagonal element $\begin{bmatrix} t&0\\ 0&t^{-1}\end{bmatrix}$.

Let $G\cong \sl_n(q)$, $\{K_1, \ldots, K_{n-1}\}$ be a Curtis-Tits system of $G$ and $T$ be the maximal split torus of $G$ normalizing $K_i$ for each $i=1,\ldots, n-1$. Assume also that $w_i\in K_i$ be Weyl group elements which inverts the tori $T_i=T\cap K_i$ for each $i=1,\ldots, n-1$. Then amalgamation on $\{K_1, \ldots, K_{n-1}\}$ the maps
 \begin{diagram}
 K_i & \rOnto^{\varphi_i} & K_i,
 \end{diagram}
where each $\varphi_i$ is a conjugation by $w_i$ on $K_i$, are conjugate to an inverse-transpose map of $K_i$.

A construction of a Curtis-Tits system $\{\KK_1, \ldots, \KK_{n-1}\}$ of a black box group $\XX$ encrypting $\sl_n(q)$ is presented in \cite{suko03} and the construction of the tori $\TT_i$ and $\TT=\langle \TT_1, \ldots, \TT_{n-1}\rangle$  together with Weyl group elements $w_i\in \KK_i$ inverting $\TT_i$ is presented in \cite{suko12A}. Since $\XX$ is generated by $\{\KK_1, \ldots, \KK_{n-1}\}$, applying amalgamation to the conjugation by $w_i \in \KK_i$, we have an automorphism of $\XX$ which encrypts the inverse transpose map composed with some inner automorphism of $G$.
\end{proof}

\subsection{An application: $\su_n(q) \hookrightarrow \sl_n(q^2)$}

We apply our arguments for the construction of $\su_n(q)$ inside $\sl_n(q^2)$. We note that the centralizer of inverse transpose map composed with Frobenius map in $\sl_n(q^2)$ is the subgroup $\su_n(q)$.

\begin{theorem}[Known characteristic] \label{th:sun-in-sln} \
Let $\XX$ be a black box group encrypting the group $\sl_n(q^2)$ for $q$ odd, $q=p^k$ for some $k$ (perhaps unknown) and a known prime number $p$. Then we can construct, in time polynomial in $\log q$ and $n$, a black box group $\YY$ encrypting the group $\su_n(q)$ and a morphism $\YY \hookrightarrow \XX$.
\end{theorem}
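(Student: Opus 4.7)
The plan is to realise $\su_n(q)$ as the fixed-point subgroup of the quasi-split involutive automorphism $\sigma = F \circ \tau$ of $G = \sl_n(q^2)$, where $F$ is the $q$-th power Frobenius and $\tau:x\mapsto (x^T)^{-1}$ is the inverse-transpose. The construction first builds $\sigma$ as a morphism on an enrichment of $\XX$, then carves out its fixed subgroup by a Curtis--Tits amalgamation.

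By Theorem~\ref{th:frobenius} I may enrich $\XX$ to a black box $\XX'$ equipped with morphisms $\XX'\longrightarrow\XX$ and $\phi:\XX'\longrightarrow\XX'$, where $\phi$ encrypts the $p$-th power Frobenius of $G$. Since we need $\phi^k$ (the $q$-th power Frobenius) and $k$ may be unknown, recover $k$ by iterating $\phi$ on a small random sample and detecting the first exponent $d$ at which $\phi^d$ stabilises the sample: with high probability this exponent equals the order $2k$ of $\phi$ on $G$. Because $k < \log_2 q$ and each application of $\phi$ is polynomial in $\log q$ and $n$, the total cost stays within the claimed budget. Independently, Theorem~\ref{th:inverse-transpose} produces a morphism $\iota:\XX'\longrightarrow\XX'$ encrypting $\tau$ composed with some inner automorphism of $G$. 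Setting $\sigma = \phi^k \circ \iota$, the automorphism of $G$ encrypted by $\sigma$ lies in the same coset of $\mathop{\rm Inn}(G)$ as $F\circ\tau$; by Lang--Steinberg applied in the ambient algebraic group $\sl_n(\overline{\GF_p})$, its fixed-point subgroup in $G$ is isomorphic to $\su_n(q)$.

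It remains to extract, as a black box, the $\sigma$-fixed subgroup inside $\XX'$. The proof of Theorem~\ref{th:inverse-transpose} already supplies a Curtis--Tits system $\{\KK_1,\ldots,\KK_{n-1}\}$ of $\XX'$, each $\KK_i$ encrypting a root $\sl_2(q^2)$-subgroup, together with a distinguished maximal torus $\TT$. The automorphism $\sigma$ permutes the nodes of the $A_{n-1}$ diagram by $i\mapsto n-i$, so each $\sigma$-orbit of nodes spans a $\sigma$-invariant subsystem subgroup of rank at most $2$. In each such local piece the restriction of $\sigma$ is either inner (on fixed nodes, where the $\sl_2$-fact that inverse-transpose is inner collapses $\sigma|_{K_i}$ to a Frobenius twisted by an inner automorphism) or is the defining quasi-split involution of a small $A_1\times A_1$ or $A_2$ subsystem; in the first case apply Theorem~\ref{th:reification} to extract the fixed subgroup $\sl_2(q)$, in the second case use standard involution-centraliser techniques of \cite{suko03,suko12A} within $\sl_2(q^2)\times\sl_2(q^2)$ or $\sl_3(q^2)$ to extract a diagonal $\sl_2(q^2)$ or an $\su_3(q)$. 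The resulting local fixed subgroups form a Curtis--Tits system for $\su_n(q)$; amalgamate them by Theorem~\ref{th:subgroups-homomorphisms} into a black box $\YY$ with an embedding $\YY\hookrightarrow\XX'$, and compose with $\XX'\longrightarrow\XX$ to obtain the required $\YY\hookrightarrow\XX$.

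The principal obstacle is this last step: in contrast to Theorem~\ref{th:reification}, one is asked to identify the fixed subgroup of an \emph{outer} automorphism of a black box Lie-type group, for which no direct reification procedure is available. The Curtis--Tits strategy circumvents the obstacle by localising to rank-$\leqslant 2$ subsystem subgroups, where $\sigma$ either becomes inner (and yields to reification) or restricts to the defining involution of a small quasi-split group $\sl_2(q^2)$ or $\su_3(q)$, constructible inside $\sl_2(q^2)\times\sl_2(q^2)$ or $\sl_3(q^2)$ by the methods already developed in the paper and in \cite{suko03,suko12A}.
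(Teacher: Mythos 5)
Your overall plan---compose a Frobenius morphism with the inverse-transpose morphism and extract the fixed subgroup locally along a Curtis--Tits system, then amalgamate---is indeed the route the paper takes, but the pivotal structural claim in your third paragraph is wrong for the automorphism you actually construct. The morphism $\iota$ supplied by Theorem~\ref{th:inverse-transpose} is amalgamated from conjugation by the Weyl elements $w_i\in\KK_i$, so it leaves \emph{every} node subgroup $\KK_i$ invariant (the inverse-transpose map sends $U_{\alpha_i}$ to $U_{-\alpha_i}$ and hence preserves $K_i=\langle U_{\alpha_i},U_{-\alpha_i}\rangle$); the Frobenius morphism of Theorem~\ref{th:frobenius} is likewise amalgamated over root $\sl_2$-subgroups and preserves them. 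Consequently $\sigma=\phi^k\circ\iota$ does \emph{not} permute the nodes by $i\mapsto n-i$: that permutation belongs to the graph automorphism in pinned form, i.e.\ to $\tau$ composed with conjugation by a representative of the longest Weyl element, which you have not constructed. Your whole case division (orbits $\{i,n-i\}$, diagonal $\sl_2(q^2)$ from $A_1\times A_1$ pieces, $\su_3(q)$ from a middle $A_2$) therefore analyses a different automorphism; and even if you passed to the pinned graph automorphism, the local construction of $\su_3(q)$ inside $\sl_3(q^2)$ that you invoke is exactly the $n=3$ instance of the theorem being proved, so the argument becomes circular there rather than resting on the cited results.

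The repair is the paper's node-by-node computation, which is simpler than your orbit analysis. On each $\KK_i\cong\sl_2(q^2)$ the restriction of $\sigma$ is $w_i\circ\phi_i$, where $w_i$ inverts the split torus $\TT_i$ and centralizes the torus $\TT_{w_i}\leqslant C_{\KK_i}(w_i)$, while $\phi_i$ acts on both tori as $x\mapsto x^{\epsilon q}$ with $q\equiv\epsilon\bmod 4$. Hence the fixed points of $w_i\circ\phi_i$ are the elements $t\in\TT_i$ with $t^{\epsilon q+1}=1$ and $t\in\TT_{w_i}$ with $t^{\epsilon q-1}=1$; these torsion subgroups are computable by raising torus elements to known powers, and since $\KK_i=\langle\TT_i,\TT_{w_i}\rangle$ they generate a black box $\CC_i=C_{\KK_i}(w_i\circ\phi_i)\cong\su_2(q)$. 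These Phan-type subgroups $\CC_1,\dots,\CC_{n-1}$ generate the fixed subgroup $\su_n(q)$, and amalgamation (Theorem~\ref{th:subgroups-homomorphisms}) yields $\YY\hookrightarrow\XX$. Note also that Theorem~\ref{th:reification} would not by itself deliver $C_{\KK_i}(\sigma)$ even in your ``fixed node'' case: $\sigma|_{\KK_i}$ involves a field automorphism, so it is not an inner involution of $\KK_i$, and reification only attaches the automorphism to the black box and locates involutions in $\Omega_1(Z(C_H(a)))$; the explicit torus computation above is what actually produces the fixed subgroup.
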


An important feature of the proof of this theorem (and other similar results in \cite{suko12B}) is that they never refer to the ground fields of the groups and do not involve any computations with unipotent elements. In fact, we interpret morphisms between functors
\[
\su_n(\cdot) \hookrightarrow \sl_n(\cdot^2).
\]
within  our black boxes. At a practical level, it means that given a black box group encrypting $\sl_3(p^2)$ for a 60 decimal digits long prime number $p$, say
\[
p=622288097498926496141095869268883999563096063592498055290461
\]
(one of the examples run in GAP on a pretty old and underpowered laptop computer), we can construct a black box subgroup
\[
\su_3(p) \hookrightarrow  \sl_3(p^2).
\]

This example shows that a modicum of categorical language is useful for the theory as well as for its implementation in the code since it suggests a natural structural approach to development of the computer code.

\begin{proof}[Proof of Theorem \ref{th:sun-in-sln}]
Let $\{\KK_1, \ldots, \KK_{n-1}\}$ be a Curtis-Tits system for $\XX$. Assume also that, for each $i=1,2,\ldots, n-1$, $\TT_i$ is a maximal split torus of $\KK_i$ where $\langle \TT_1, \ldots, \TT_{n-1} \rangle$ encrypts a maximal split torus of $G$ normalizing each $\KK_i$, and $w_i\in \KK_i$ are the Weyl group elements, that is, $w_i$ inverts $\TT_i$. Furthermore, set $W_i=C_{\KK_i}(w_i)$ and $\TT_{w_i}$ is the maximal torus in $W_i$ for each $i=1,2,\dots,n-1$.

Let $\phi_i$ denote the restriction of the Frobenius map on $\KK_i$.

Since the inverse transpose map is amalgamated over the conjugation by Weyl group elements $w_i$, the inverse transpose map inverts $\TT_i$ and fixes $\TT_{w_i}$. Moreover the restriction of the Frobenius map $\phi_i$ acts on $\TT_i$ and $\TT_{w_i}$ as $x\mapsto x^{\epsilon q}$ for $q \equiv \epsilon \mbox{ mod } 4$, $\epsilon=\pm 1$. Therefore, the centralizer of $w_i\circ \phi_i$ consists of the elements from $\KK_i$ satisfying
\[
t=t^{-\epsilon q} \mbox{ or, equivalently } t^{\epsilon q+1}=1 \mbox{ for } t\in \TT_i,
\]
and
\[
t=t^{\epsilon q} \mbox{ or, equivalently } t^{\epsilon q-1}=1 \mbox{ for } t \in \TT_{w_i}.
\]
Since $\KK_i=\langle \TT_i,\TT_{w_i} \rangle$, we can compute $\CC_i:=C_{\KK_i}(w_i\circ \phi_i)$. Moreover, since $\XX =\langle \KK_1, \ldots, \KK_{n-1} \rangle$,  amalgamation over $\CC_i$ gives the subgroup encrypting the fixed points of the inverse transpose map composed with Frobenius map which is isomorphic to $\su_n(q)$.
\end{proof}

\section{Structure recovery}
\label{sec:oracle}

In this section, we revise the classification of black box group problems and outline our vision of the hierarchy of typical black box group problems.

\label{ssec:matrix-project}

\subsection{The hierarchy of black box problems}

\begin{description}
\item[\textbf{Verification Problem}] Is the unknown group encrypted by a black box group $\XX$ isomorphic to the given group $G$ (``target group'')?

\item[\textbf{Recognition Problem}] Determine the isomorphism class of the group encrypted by $\XX$.
\end{description}

The Verification  Problem arises as a sub-problem within more complicated Recognition Problems. The two problems have dramatically different complexity. For example, the celebrated Miller-Rabin algorithm \cite{rabin80.128} for testing primality of the given odd number $n$ is simply a  black box algorithm for solving the verification problem for the multiplicative group   $\mathbb{Z}/n\mathbb{Z}^*$ of residues modulo $n$ (given by a simple black box: take your favorite random numbers generator and generate random integers between $1$ and $n$) and the cyclic group $\mathbb{Z}/(n-1)\mathbb{Z}$ of order $n-1$ as the target group. On the other hand, if $n=pq$ is the product of primes $p$ and $q$, the recognition problem for the same black box group means finding the direct product decomposition
\[
\mathbb{Z}/n\mathbb{Z}^* \cong (\mathbb{Z}/(p-1)\mathbb{Z}) \oplus (\mathbb{Z}/(q-1)\mathbb{Z})
\]
which is equivalent to factorization of $n$ into product of primes.

The next step after finding the isomorphism type of the black box group $\XX$ is

\begin{description}
\item[Constructive Recognition] Suppose that a black box group $\XX$ encrypts a concrete and explicitly given group $G$. Rewording a definition given in \cite{brooksbank08.885},
    \begin{quote}
        \emph{ The goal of a constructive recognition algorithm is
to construct an effective isomorphism $\Psi: G \longrightarrow X$. That is, given $g\in G$, there is an efficient procedure to construct a string $\Psi(g)$ encrypting $g$  in $\XX$ and given a string $x$  produced by $\XX$, there is an efficient procedure to construct the element $\Psi^{-1}(x) \in G$ encrypted by $\XX$.}
    \end{quote}
\end{description}

However, there are still no really efficient constructive recognition algorithms for black box groups $\XX$ of (known) Lie type over a finite field of large order $q=p^k$. The first computational obstacles for known algorithms
\cite{brooksbank01.79,brooksbank03.162,brooksbank08.885,brooksbank01.95,brooksbank06.256,celler98.11,conder06.1203,leedham-green09.833} are the need to construct unipotent elements in black box groups, \cite{brooksbank01.79,brooksbank03.162,brooksbank08.885,brooksbank06.256,brooksbank01.95,celler98.11} or to solve discrete logarithm problem for matrix groups \cite{conder01.113,conder06.1203,leedham-green09.833}.

Unfortunately, the probability that the order of a random element is divisible by $p$ is $O(1/q)$ \cite{guralnick01.169}, so one has to make  $O(q)$ (that is, \emph{exponentially many}, in terms of the input length $O(\log q)$ of the black boxes and the algorithms) random selections of elements in a given group to construct a unipotent element. However, this brute force approach is still working for small values of $q$, and  Kantor and Seress \cite{kantor01.168} used it to develop an algorithm for recognition of black box classical groups. Later the algorithms of \cite{kantor01.168} were upgraded to polynomial time constructive recognition algorithms \cite{brooksbank03.162, brooksbank08.885, brooksbank01.95, brooksbank06.256} by assuming the availability of additional \emph{oracles}:
 \begin{itemize}
 \item the \emph{discrete logarithm oracle} in $\mathbb{F}_q^*$, and
  \item the  \emph{$\sl_2(q)$-oracle}.
   \end{itemize}
Here, the \emph{$\sl_2(q)$-oracle} is a procedure for constructive recognition of $\sl_2(q)$; see  discussion in \cite[Section~3]{brooksbank08.885}.

\begin{quote}
\textbf{\emph{We emphasize that in this and subsequent papers we are using neither the discrete logarithm oracle in $\mathbb{F}_q^*$ nor the  $\sl_2(q)$-oracle.}}
\end{quote}

\subsection{Structure recovery}
\label{ssec:structure-recovery}

Suppose that a black box group $\XX$ encrypts a concrete and explicitly given group $G=G(\GF_q)$ of Chevalley type $G$ over a explicitly given finite field $\GF_q$. To achieve \emph{structure recovery} in $\XX$ means to construct, in probabilistic polynomial time in $\log |G|$,
    \bi
     \item a black box field $\KK$ encrypting $\GF_q$, and
     \item a probabilistic polynomial time morphism $$\Psi: G(\KK) \longrightarrow \XX.$$
    \ei

 We extend our definition from \cite{suko12A} where it refers to a special case of the present one.
 
 An example of a structure recovery is presented in our next paper \cite{suko14A} for black box groups $\ppsl_2(q)$.

\subsection{Separation of flesh from bones}
Recall that simple algebraic groups (in particular, Chevalley groups over finite fields) are understood in the theory of algebraic groups as functors from the category of unital commutative rings into the category of groups; most structural properties of a Chevalley group are encoded in the functor; the field mostly provides the flesh on the bones.

A ``category-theoretical'' approach  allows us to carry out constructions like the following one.

\begin{theorem}[Known characteristic \textbf{\cite{suko12B}}]
\label{th:G2-in-SL8}
Let $\XX$ be a black box group encrypting the group $\sl_8(F)$ for a field $F$ of (unknown) odd order $q = p^k$ but known $p= {\rm char}\,F$.  Then we can construct, in time polynomial in $\log |F|$, a chain of black box groups and morphisms
\[
\UU \hookrightarrow \VV \hookrightarrow \WW \hookrightarrow \XX
\]
that encrypts the chain of canonical embeddings
\[
\mathop{G_2}(F) \hookrightarrow \so_7(F) \hookrightarrow\so_8^+(F) \hookrightarrow \sl_8(F).
\]
\end{theorem}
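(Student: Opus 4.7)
The plan is to realise each inclusion in the chain $\mathop{G_2}(F) \hookrightarrow \so_7(F) \hookrightarrow \so_8^+(F) \hookrightarrow \sl_8(F)$ as the fixed subgroup of an automorphism of the ambient group, and to construct the corresponding black boxes by amalgamating local centralisers along a Curtis--Tits system, in the spirit of the proofs of Theorems~\ref{th:inverse-transpose} and~\ref{th:sun-in-sln}. The organising picture is the $S_3$-symmetry of the Dynkin diagram $D_4$: $\so_8^+$ is the fixed subgroup in $\sl_8$ of the graph involution of $A_7$ (the inverse-transpose map, with a suitable inner twist giving $+$ form type), $\so_7$ is the fixed subgroup in $\so_8^+$ of any order-$2$ $D_4$-graph automorphism swapping two end nodes, and $G_2$ is the fixed subgroup of triality. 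Since triality together with any such swap generates a full $S_3$, the containment $G_2 \subset \so_7$ is automatic, so I only need to realise one order-$2$ $D_4$-graph automorphism and one triality.

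First I would build $\WW \hookrightarrow \XX$. Apply Theorem~\ref{th:inverse-transpose} to obtain a morphism $\varphi : \XX \longrightarrow \XX$ encrypting inverse-transpose (up to inner automorphism), then mimic the argument of Theorem~\ref{th:sun-in-sln} with trivial Frobenius in place: on the $A_7$-Curtis--Tits system $\{\KK_1, \ldots, \KK_7\}$ of $\XX$ from \cite{suko03}, compute the centralisers of $\varphi\!\mid_{\KK_i}$ inside each $\KK_i$ and amalgamate them by Theorem~\ref{th:subgroups-homomorphisms}. The inner twist of $\varphi$ must be chosen so that the resulting orthogonal form is of $+$ type; this can be read off from the isomorphism type of one local centraliser after a cheap constructive recognition of an $\sl_2$-subgroup.

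Next, apply the classical-group machinery of \cite{suko03} to $\WW$ itself to produce a $D_4$-Curtis--Tits system $\{\LL_0, \LL_1, \LL_2, \LL_3\}$ of $\WW$, with $\LL_0$ the branch node and $\LL_1, \LL_2, \LL_3$ the three end nodes. Then build two morphisms $\sigma_2, \sigma_3 : \WW \longrightarrow \WW$, of orders $2$ and $3$ respectively, with $\sigma_2$ swapping $\LL_1 \leftrightarrow \LL_2$ and fixing $\LL_0, \LL_3$, and $\sigma_3$ cycling $\LL_1 \to \LL_2 \to \LL_3 \to \LL_1$ and fixing $\LL_0$. Each $\sigma_k$ is produced by exhibiting $k$ generating tuples of $\WW$ compatibly matched by the intended permutation of the end nodes and invoking Theorem~\ref{th:automorphism}. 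A second round of local-centraliser amalgamation over the $D_4$-Curtis--Tits system then yields $\VV = C_\WW(\sigma_2)$ encrypting $\so_7(F)$ and $\UU = C_\WW(\sigma_3)$ encrypting $\mathop{G_2}(F)$; the inclusion $\UU \hookrightarrow \VV$ follows because an element fixed by $\sigma_3$ is also fixed by $\sigma_2\sigma_3\sigma_2^{-1}$, so $\UU \leqslant C_\WW(\langle \sigma_2,\sigma_3\rangle) \leqslant \VV$.

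The hard part will be producing $\sigma_3$. For $\sigma_2$ the tuple-to-tuple matching is easy to read off from Weyl elements inside the rank-$2$ subgroups $\langle \LL_0, \LL_i\rangle \cong \sl_3(F)$, which link adjacent $\sl_2$'s through the branch. Triality, however, is a genuinely outer automorphism with no inner realisation, and no rank-$2$ subsystem of $D_4$ witnesses the $3$-cycle on the three end nodes. The matching between $\LL_1, \LL_2, \LL_3$ must therefore be built out of Chevalley commutator relations rather than by conjugations internal to $\WW$, and this is where the known-characteristic hypothesis of the theorem becomes indispensable: using Theorem~\ref{th:frobenius} to align the Frobenius actions on the three end-node $\sl_2$'s, one can identify their root-element parametrisations compatibly with the implicit $D_4$-root datum, and only then produce the three matched generating tuples demanded by Theorem~\ref{th:automorphism}.
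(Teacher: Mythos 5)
First, a caveat: this paper does not actually prove Theorem~\ref{th:G2-in-SL8}; it is quoted from \cite{suko12B} (in preparation), and the only hints given here are that the proof uses Theorem~\ref{th:subgroups-homomorphisms} and that the construction is ``field-free'' and involves no computations with unipotent elements. Your overall scheme---realise $\so_8^+(F)$ as the fixed subgroup in $\sl_8(F)$ of an inner-twisted inverse-transpose map, and $\so_7(F)$ and $G_2(F)$ as fixed subgroups of a graph involution and a triality of $D_4$, with the black boxes obtained by amalgamation over Curtis--Tits systems---is consistent with that announced philosophy, so the architecture is reasonable. But as a proof sketch it has concrete gaps.

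The decisive step, constructing the triality $\sigma_3$, is not constructed: Theorem~\ref{th:automorphism} only applies once you already have three generating tuples whose matching is \emph{known} to extend to an automorphism, and verifying that extension is the whole difficulty, not a technicality. Your proposed route---aligning Frobenius maps via Theorem~\ref{th:frobenius} and then identifying ``root-element parametrisations'' of the three end-node $\sl_2$'s compatibly with the $D_4$-root datum---amounts to producing unipotent root elements and a Steinberg-type parametrisation inside a black box over a field of unbounded order; the paper explicitly flags this as infeasible for large $q$ (the proportion of $p$-singular elements is $O(1/q)$, \cite{guralnick01.169}) and advertises that \cite{suko12B} avoids unipotent computations altogether, so this step needs a genuinely different mechanism. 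Second, your argument for $\UU\hookrightarrow\VV$ is a non sequitur: an element fixed by $\sigma_3$ is fixed by $\sigma_2\sigma_3\sigma_2^{-1}=\sigma_3^{-1}$, which is no new information and does not place it in $C_\WW(\sigma_2)$; the containment $G_2\subset\so_7$ holds only for a compatibly chosen pair $(\sigma_2,\sigma_3)$, and arranging and proving that compatibility is part of the work. Third, the first step is undercalibrated: Theorem~\ref{th:inverse-transpose} delivers inverse-transpose composed with an \emph{unspecified} inner automorphism, so the fixed subgroup is a priori the isometry group of an unknown bilinear form and may be $\sp_8(F)$ or a minus-type orthogonal group rather than $\so_8^+(F)$; this cannot be detected, let alone corrected, ``from one local centraliser'', since in each $\KK_i$ the local fixed subgroup is a torus in all cases, and even in each rank-two subgroup $\langle\KK_i,\KK_{i+1}\rangle\cong\sl_3(q)$ one sees $\so_3$ regardless of the symmetry type of the global form. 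Finally, you repeatedly assume Phan-type generation statements (e.g.\ that the local fixed subgroups, here mere tori, generate the whole fixed subgroup, and likewise for the second round over the $D_4$ system); Theorem~\ref{th:subgroups-homomorphisms} amalgamates morphisms but supplies no such generation facts, which in the analogous unitary case of Theorem~\ref{th:sun-in-sln} is where the real group theory sits.
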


Again, these constructions  (and even the embedding \[^3{\rm D}_4(q) \hookrightarrow  \so_8^+(q^3),\]  also done in \cite{suko12B}) are ``field-free'' and, moreover, ``characteristic-free''.

Another aspect of the concept of ``structure recovery'' is that it follows an important technique from model-theoretic algebra: interpretability of one algebraic structure in another, see, for example, \cite{borovik94-BN}. Construction of a black box field in a black box group in \cite{suko14A} closely follows this model-theoretic paradigm.

\section*{Acknowledgements}

This paper would have never been written if the authors did not enjoy the warm hospitality offered to them at the Nesin Mathematics Village (in \c{S}irince, Izmir Province, Turkey) in August 2011, August 2012, and July 2013; our thanks go to Ali Nesin and to all volunteers and staff who have made the Village a mathematical paradise.

Final touches to the paper were put during the Workshop on Model Theory and Groups at \.{I}stanbul Center for Mathematical Sciences in March 2014.

We thank Adrien Deloro for many fruitful discussions, in \c{S}irince and elsewhere, and
Bill Kantor, Charles Leedham-Green, and Rob Wilson for their helpful comments.

Special thanks go to our logician colleagues: Paola D'Aquino, Gregory Cherlin, Emil  Je{\v{r}}{\'a}bek, Jan Kraj{\'{\i}}{\v{c}}ek,  Angus Macintyre, Jeff Paris, Jonathan Pila,  Alex Wilkie, and Boris Zilber  for pointing to fascinating connections with logic and complexity theory.

Our work was partially supported by the Marie Curie FP7 Initial Training Network MALOA (PITN-GA-2008-MALOA no. 238381).

We gratefully acknowledge the use of Paul Taylor's \emph{Commutative Diagrams} package, \url{http://www.paultaylor.eu/diagrams/}.

\end{document}